 \newtheorem{thm}{Theorem}[section]
 \newtheorem{lem}[thm]{Lemma}
 \theoremstyle{definition}
 \newtheorem{defn}[thm]{Definition}
 \theoremstyle{remark}
 \newtheorem{rem}[thm]{Remark}
 \numberwithin{equation}{section}
 \def\P{\mathrm{P}}
 \def\L{\mathrm{L}}
 \def\x{\mathrm{x}}
 \def\a{\mathrm{a}}
 \def\A{\mathrm{A}}
 \def\p{\mathrm{p}}
 \def\s{\mathrm{s}}
 \def\t{\mathrm{t}}
 \def\c{\mathrm{c}}
 \def\f{\mathrm{f}}
 \def\RR{\mathbb R}
\begin{document}
%
%
%
%
%
%
%
%
%
\title[On solutions of elliptic equations with variable exponents]
 {On solutions of elliptic equations with\\ variable exponents and measure data in $\mathbb{R}^n$}
\author[L.\,M.~Kozhevnikova]{L.\,M.~Kozhevnikova}

\address{%
Sterlitamak Branch of Bashkir State University\\
Sterlitamak, Russia;\\
Elabuga Institute of Kazan Federal\\
University, Elabuga, Russia}

\email{kosul@mail.ru}

\thanks{This work was supported by the RFBR under Grant [18-01-00428].}
\subjclass{Primary 35J62; Secondary 35J25}

\keywords{anisotropic  elliptic equation, entropy solution, renormalized solution, existence solution, variable exponent, diffuse measure}

\date{February 1, 2018}

\begin{abstract}
	Anisotropic elliptic equations of the second order with variable exponents in nonlinearities and the right-hand side as a diffuse measure are considered in the space $\mathbb{R}^n$.
	The existence of an entropy solution in anisotropic Sobolev--Orlicz spaces with variable exponents is proved. The obtained entropy solution is shown to be a renormalized solution.
\end{abstract}

\maketitle
\section{Introduction}\label{s1}

In the space
$\mathbb{R}^n=\{\x=(x_1,x_2,\ldots,x_n)\}$, $n \geq 2,$ we consider the anisotropic elliptic equation
\begin{equation}\label{ur}
-{\rm div}\,\a(\x,u,\nabla
u)+|u|^{p_0(\x)-2}u+b(\x,u,\nabla u)=\mu,\quad \x\in \mathbb{R}^n,
\end{equation}
where $\mu$ is a bounded Radon measure of a special form.

The concept of renormalized solutions serves as the main step in the investigation of general degenerate elliptic equations with a measure data.
The first definition of such solutions has been given in \cite{DMOP_1999}, \cite{Malusa}, and then extended in \cite{Bidaut_Veron} to a local form.
The key result in \cite{DMOP_1999} is the claim that each Radon measure $\mu$ in a bounded domain $\Omega\subset \mathbb{R}^n$ with finite total variation can be decomposed as $\mu =
\mu_0 + \mu_s$.
Here, $\mu_0$ is called a diffuse measure with respect to the $p$--capacity and $\mu_0(E)=0$ for every $E\subseteq\Omega$ such that ${\rm Cap}_{p}\,(E,\Omega)=0$, while the measure $\mu_s$ is called singular and it is concentrated on a set of the $p$--capacity zero.
It was shown that $\mu_0$ is diffuse with respect to the $p$--capacity if and only if $\mu_0 \in L_1 (\Omega) + H^{-1}_{p'} (\Omega)$ ($H^{-1}_{p'} (\Omega)$ is the dual space to $\mathring
{H}_{p}^1(\Omega)$), i.e., there exist functions $f\in L_1(\Omega)$ and $\f=(f_1,\dots,f_n)\in (L_{p'}(\Omega))^n$ such that
\begin{equation}\label{DMOP}
\mu_0=f-{\rm div}\, \f.
\end{equation}
The stability and existence of a renormalized solution of an equation of the type
\begin{equation}\label{ur0}
-{\rm div}\,\a(\x,\nabla
u)+a_0(\x,u)=\mu,\quad \x\in \Omega,
\end{equation}
where $a_0(\x,s_0)\equiv 0$ and $\Omega$ is a bounded domain, under the homogeneous Dirichlet boundary conditions
\begin{equation}\label{gu}
u = 0 ~~\text{on}~~ \partial \Omega,
\end{equation}
have been studied in the works \cite{DMOP_1999}, \cite{Malusa}.

 In \cite{BMMP_2002} it was proved that,
again when $\Omega$
 is bounded but $a_0\not \equiv 0,$ for every Radon measure $\mu$ with bounded variation in $\Omega$  there exists at least a
renormalized solution of the Dirichlet problem \eqref{ur0}, \eqref{gu}. Moreover in \cite{BMMP_2002_}  it is shown that such a solution is unique whenever $\mu$ does not charge sets of negligible $p$-capacity.

It is shown in \cite{Malusa_Porzio} that, in the case of an arbitrary domain $\Omega \subsetneq \mathbb{R}^n$, $\mu_0$ is diffuse with respect to the $p$--capacity if and only if $\mu_0\in L_1 (\Omega) + W^{-1}_{p'} (\Omega)$ ($W^{-1}_{p'} (\Omega)$ is the dual space to $\mathring{W}_{p}^1(\Omega)$), that is, there exist functions $f\in L_1(\Omega),\;f_0\in L_{p'}(\Omega)$, $\f=(f_1,\dots,f_n)\in (L_{p'}(\Omega))^n$ such that
\begin{equation}\label{malusa}
\mu_0=f+f_0-{\rm div}\, \f.
\end{equation}
In \cite{Malusa_Porzio}, the existence, and for $\mu = \mu_0$ also the uniqueness, of a renormalized solution of the Dirichlet problem \eqref{ur0}, \eqref{gu} in an arbitrary domain $\Omega$ have been obtained.
Let us note that the $p$--capacity in the articles \cite{DMOP_1999}, \cite{Malusa_Porzio} is defined differently, which is due to the choice of the main space ($\mathring{H}_{p}^1(\Omega)$ or $\mathring
{W}_{p}^1(\Omega)$). This leads to the difference in the representation of the diffuse measure (see \eqref{DMOP}, \eqref{malusa}).

The modern theory of elliptic equations with non-standard growth conditions was developed by V. V.\ Zhikov \cite{Zhik}, Yu. A.\ Alkhutov \cite{Alxut}.
The most important variational methods for elliptic PDEs described by nonhomogeneous differential operators and containing one or more power-type nonlinearities with variable exponents are presented in the monograph \cite{Repov0}. The authors of \cite{Repov0} give a systematic treatment of the basic mathematical theory and constructive methods for these classes of nonlinear elliptic equations as well as their applications to various processes arising in the applied sciences.

Let $Q\subseteq\mathbb{R}^n$ be an arbitrary domain in $\mathbb{R}^n$.
Introduce the space
$$
C^+(\overline Q)=\{p\in C(\overline Q)\;:\; 1<p^-\leq p^+<+\infty \},
$$
where $p^-=\inf\limits_{\x\in Q}p(\x)$, $p^+=\sup\limits_{\x\in Q}p(\x)$.
For
$p(\cdot)\in C^+(\overline{Q})$, we define the Lebesgue space with variable exponent
$L_{p(\cdot)}(Q)$ as a set of measurable on $Q$ real-valued functions $v$ such that
$$
\rho_{p(\cdot),Q}(v)=\int\limits_Q|v(\x)|^{p(\x)}d\x<\infty.
$$
The Luxemburg norm of $L_{p(\cdot)}(Q)$ is defined by
$$
\|v\|_{L_{p(\cdot)}(Q)}=\|v\|_{p(\cdot),Q}=\inf\left\{k>0\;\Big |\;
\rho_{p(\cdot),Q}(v/k)\leq1\right\}.
$$
The space $L_{p(\cdot)}(Q)$ is a separable reflexive Banach space \cite{Diening}.

The norm of $L_p(Q)$, $p\in[1,\infty],$ will be denoted as $\|\cdot\|_{p,Q}$.
In the case where $Q=\mathbb{R}^n$, we will use the corresponding notations
$C^+(\mathbb{R}^n)$, $\|\cdot\|_{p(\cdot)}$, $\rho_{p(\cdot)}(\cdot)$, $\|\cdot\|_{p}$.

For an arbitrary domain $\Omega\subsetneq \mathbb{R}^n$, we define the Sobolev space with variable exponent  $\mathring{H}_{p(\cdot)}^1(\Omega)$ as a completion of $C_0^{\infty}(\Omega)$ with respect to the norm
$$
\|v\|_{\mathring{H}_{p(\cdot)}^1(\Omega)}=\|\nabla v\|_{p(\cdot),\Omega}.
$$

The set of bounded Radon measures will be denoted as $\mathcal{M}^b(\Omega)$.
A measure $\mu\in\mathcal{M}^b(\Omega)$ is called diffuse with respect to the capacity ${\rm Cap}_{p(\cdot)}$ ($p(\cdot)$--capacity), whenever $\mu(E)=0$ for any $E$ such that ${\rm Cap}_{p(\cdot)}\,(E,\Omega)=0$.

Let $\mathcal{M}^b_{p(\cdot)}(\Omega)$ stand for the space of all bounded Radon measures which are diffuse with respect to the $p(\cdot)$--capacity.
It is proved in \cite{Ouaro_2013}, \cite{Zhang_2014} that, in the case of a bounded domain $\Omega$,  $\mu\in\mathcal{M}^b_{p(\cdot)}(\Omega)$ if and only if $\mu\in L_1(\Omega)+H^{-1}_{p'(\cdot)}(\Omega)$ ($H^{-1}_{p'(\cdot)} (\Omega)$ is the dual space to $\mathring{H}_{p(\cdot)}^1(\Omega)$), that is, there exist functions $f\in L_1(\Omega)$, $\f=(f_1,\dots,f_n)\in (L_{p'(\cdot)}(\Omega))^n$ such that
$$
\mu=f-{\rm div}\, \f.
$$

Assuming that $\Omega\subset \mathbb{R}^n$ is bounded, the authors of \cite{Zhang_Zhou} considered the Dirichlet problem for the equation with the $p(\x)$-Laplacian
\begin{equation*}
-{\rm div}\,(|\nabla
u|^{p(\x)-2}\nabla
u)=\mu,\quad \x\in \Omega,
\end{equation*}
under the boundary conditions \eqref{gu}.
The existence and uniqueness of entropy and renormalized solutions, as well as their equivalence, have been proved for $\mu\in L_1(\Omega)+H^{-1}_{p'(\cdot)}(\Omega)$.

The existence and uniqueness of an entropy solution of the Dirichlet problem in a bounded domain $\Omega\subset \mathbb{R}^n$, $n\geq 3,$ for the equation
\begin{equation*}
-\sum_{i=1}^n (a_i(\x,
u_{x_i}))_{x_i}+b(u)=\mu,\quad \x\in \Omega,
\end{equation*}
under the boundary conditions \eqref{gu} have been considered in \cite{Konate_Ouaro}.

Let us mention the work \cite{Afr_2018} which studies the existence of a renormalized solution of the Dirichlet problem in a bounded domain $\Omega$ for an equation with variable exponents in nonlinearities of the type \eqref{ur0} with $a_0(\x,s_0) \equiv 0$ and a general Radon measure of finite total variation.

The articles \cite{Benboubker_2015}, \cite{Azroul_2018} are devoted to the existence of entropy solutions of the Dirichlet problem in a bounded domain $\Omega$ for equations with variable exponents in nonlinearities of the type
\begin{equation}\label{urf}
-{\rm div}\,(\a(\x,u,\nabla
u)+\c(u))+a_0(\x,u,\nabla
u)=\mu,\quad \x\in \Omega.
\end{equation}
The existence of an entropy solution of problem \eqref{urf}, \eqref{gu} with  $\mu=f-{\rm div}\, \f\in L_1(\Omega)+H^{-1}_{p'(\cdot)}(\Omega)$ has been established.
In \cite{Benboubker_2015}, \cite{Azroul_2018}, it is assumed that
$\c\in C_0(\mathbb{R},\mathbb{R}^n)$, $\a(\x,s_0,\s):\Omega\times\mathbb{R}\times\mathbb{R}^n\rightarrow \mathbb{R}^n$ is a Carath\'eodory function, and there exist a nonnegative function $\Phi\in L_{p'(\cdot)}(\Omega)$ and positive numbers $\widehat{a}, \overline{a}$ such that for a.e.\ $\x\in\Omega$ and all $s_0\in \mathbb{R}$,  $\s,\t\in\mathbb{R}^n$, $i=1,\ldots,n$,  there hold
\begin{equation}\label{ogr}
|\a(\x,s_0,\s)|\leq \widehat{a}\left(|s_0|^{p(\x)-1}+|\s|^{p(\x)-1}
+\Phi(\x)\right);
\end{equation}
\begin{equation}\label{mon}
(\a(\x,s_0,\s)-\a(\x,s_0,\t))\cdot(\s-\t)>0,\quad \s\neq \t;
\end{equation}
\begin{equation}\label{koerc}
\a(\x,s_0,\s)\cdot\s\geq \overline{a}|\s|^{p(\x)}.
\end{equation}
Here, ${\s}\cdot{\t}=\sum\limits_{i=1}^ns_it_i,\;$$\s=(s_1,\ldots,s_n),\;$$\t=(t_1,\ldots,t_n)$.

Let us remark that, introducing the notation $\widetilde{\a}(\x,s_0,\s)=\a(\x,s_0,\s)+\f$, we get the equation
$$
-{\rm div}(\widetilde{\a}(\x,u,\nabla
u)+\c(u))+a_0(\x,u,\nabla u)=f
$$
where the function $\widetilde{\a}(\x,s_0,\s)$ satisfies assumptions \eqref{ogr}, \eqref{mon}.
Moreover, the coercivity assumption \eqref{koerc} is written as
$$
\widetilde{\a}(\x,s_0,\s)\cdot\s\geq \widetilde{a}|\s|^{p(\x)}-\phi(\x),\quad \phi\in L_1(\Omega).
$$

In the present article, this idea is developed for the anisotropic equation \eqref{ur} with a diffuse measure $\mu$ and a function $\a(\x,s_0,\s)$ on which less restrictive assumptions than in \cite{Benboubker_2015}, \cite{Azroul_2018} are imposed (see assumptions \eqref{us1}--\eqref{us3} below).

Thus, the works known to the author contain results on entropy and renormalized solutions of elliptic problems in bounded domains (except \cite{Beni}, \cite{Bidaut_Veron}, \cite{Malusa_Porzio} for equations with power-type nonlinearities).
In the present article, the existence of entropy solutions of equation \eqref{ur} in the whole $\mathbb{R}^n$ from anisotropic Sobolev--Orlicz spaces with variable exponents is established.
Moreover, it is proved that the obtained solution is a renormalized solution of equation \eqref{ur}.

Notice that results on the existence and uniqueness of entropy and renormalized solutions of nonlinear equations with variable exponents in nonlinearities and right-hand side from the space $L_1(\Omega)$ in arbitrary domains have been obtained in \cite{Kozhe_SMFN}--\cite{Muk}.

\section{Capacity, measure, and anisotropic Sobolev--Orlicz spaces with variable exponents}\label{s2}

Let
$p\in C^+(\mathbb{R}^n)$.
The following Young's inequality is satisfied:
\begin{equation}\label{ung}
|zy|\leq |y|^{p(\x)}+|z|^{p'(\x)},\quad z,y\in \mathbb{R},\quad \x\in \mathbb{R}^n, \quad p'(\x)=p(\x)/(p(\x)-1).
\end{equation}
Moreover, in view of convexity, there holds
\begin{equation}\label{sum}
|y+z|^{p(\x)}\leq 2^{p^+-1}(|y|^{p(\x)}+|z|^{p(\x)}),\quad z,y\in \mathbb{R},\quad\x\in \mathbb{R}^n.
\end{equation}
For any $u\in L_{p'(\cdot)}(\mathbb{R}^n)$, $v\in L_{p(\cdot)}(\mathbb{R}^n)$, the H\"older inequality
\begin{equation}\label{gel}
\int\limits_{\mathbb{R}^n} |u(\x)v(\x)|\, d\x\leq 2\|u\|_{p'(\cdot)}\|v\|_{p(\cdot)}
\end{equation}
is valid, and the following relations take place~\cite{Diening}:
\begin{equation*}
\min\{\|v\|^{p^-}_{p(\cdot)},\|v\|^{p^+}_{p(\cdot)}\}\leq\rho_{p(\cdot)}(v)\leq\max\{\|v\|^{p^-}_{p(\cdot)},\|v\|^{p^+}_{p(\cdot)}\}\leq \|v\|^{p^+}_{p(\cdot)}+1,\label{st2}
\end{equation*}
\begin{align}
\notag
\min\{\rho^{1/p^+}_{p(\cdot)}(v),\rho^{1/p^-}_{p(\cdot)}(v)\}
\leq \|v\|_{p(\cdot)}
&\leq \max\{\rho^{1/p^+}_{p(\cdot)}(v),\rho^{1/p^-}_{p(\cdot)}(v)\}\\
&\leq\left(\rho_{p(\cdot)}(v)+1\right)^{1/p^-}.
\label{st3}
\end{align}

Assume $\overrightarrow{\p}(\cdot)=(p_1(\cdot),p_2(\cdot),\ldots,p_n(\cdot))\in (C^+(\mathbb{R}^n))^n$, $\overrightarrow{\bf p}(\cdot)=(p_0(\cdot),\overrightarrow{\p}(\cdot))\in (C^+(\mathbb{R}^n))^{n+1}$, and define
$$
p_+(\x)=\max_{i=\overline{1,n}}p_i(\x),\quad p_-(\x)=\min_{i=\overline{1,n}}p_i(\x),\quad  \x\in \mathbb{R}^n.
$$
Let $\overrightarrow{\bf p}(\cdot)=(p_0(\cdot),p_1(\cdot),\dots,p_n(\cdot))\in (C^+(\mathbb{R}^n))^{n+1}$ and let $\Omega\subsetneq\mathbb{R}^n$ be an arbitrary domain in $\mathbb{R}^n$.
Anisotropic Sobolev--Orlicz spaces with variable exponents $\mathring{H}_{\overrightarrow{\p}(\cdot)}^1(\Omega)$, ${W}_{\overrightarrow{\bf p}(\cdot)}^{1}(\mathbb{R}^n)$ are defined as completions of the spaces $C_0^{\infty}(\Omega)$, $C_0^{\infty}(\mathbb{R}^n)$ with respect to the norms
\begin{align*}
\|v\|_{\mathring{H}_{\overrightarrow{\p}(\cdot)}^1(\Omega)}
&=\sum_{i=1}^n\|v_{x_i}\|_{p_i(\cdot),\Omega},\\
\|v\|_{{W}_{\overrightarrow{\bf p}(\cdot)}^{1}(\mathbb{R}^n)}
&=\|v\|_{p_0(\cdot)}+\sum_{i=1}^n\|v_{x_i}\|_{p_i(\cdot)},
\end{align*}
respectively.
The spaces  $\mathring{H}_{\overrightarrow{\p}(\cdot)}^1(\Omega)$, ${W}_{\overrightarrow{\bf p}(\cdot)}^{1}(\mathbb{R}^n)$ are reflexive Banach spaces \cite{Fan}.

We define by ${\L}_{\overrightarrow{\p}(\cdot)}(\mathbb{R}^n)$ the space $L_{p_1(\cdot)}(\mathbb{R}^n)\times\ldots\times L_{p_n(\cdot)}(\mathbb{R}^n)$ endowed with the norm
$$
\|{\rm v}\|_{{\L}_{\overrightarrow{\p}(\cdot)}(\mathbb{R}^n)}=\|{\rm v}\|_{\overrightarrow{\p}(\cdot)}=\|v_1\|_{p_1(\cdot)}+\ldots+\|v_n\|_{p_n(\cdot)},\quad
{\rm v}=(v_1,\ldots,v_n)\in {\L}_{\overrightarrow{\p}(\cdot)}(\mathbb{R}^n).
$$

The set of bounded Radon measures will be denoted as $\mathcal{M}^b(\mathbb{R}^n)$.
A measure $\mu\in\mathcal{M}^b(\mathbb{R}^n)$ will be called diffuse with respect to the capacity ${\rm Cap}_{\overrightarrow{\bf p}(\cdot)}$ ($\overrightarrow{\bf p}(\cdot)$--capacity) if $\mu(B)=0$ for any Borel set $B\subset\mathbb{R}^n$ such that ${\rm Cap}_{\overrightarrow{\bf p}(\cdot)}\,(B,\mathbb{R}^n)=0$.
Here, the $\overrightarrow{\bf p}(\cdot)$--capacity of a compact  set $K$ with respect to $\mathbb{R}^n$ is defined by
$$
{\rm Cap}_{\overrightarrow{\bf p}(\cdot)}\,(K,\mathbb{R}^n)=\inf\limits_{S_{p(\cdot)}(K)}\int\limits_{\mathbb{R}^n}{\bf P}(\x,v,\nabla v)\,d\x,
$$
$$
S_{p(\cdot)}(K)=\left\{v\in
C^{\infty}_0(\mathbb{R}^n)\;\Big |\; v\geq \chi_K  \right\},
$$
where
${\bf P}(\x,s_0,\s)=\P(\x,\s)+|s_0|^{p_0(\x)}$, $\P(\x,\s)=\sum\limits_{i=1}^n|s_i|^{p_i(\x)}$, and
$\chi_K$ is the characteristic function of the set $K$.
Then the $\overrightarrow{\bf p}(\cdot)$--capacity of a Borel set $B\subset \mathbb{R}^n$ with respect to $\mathbb{R}^n$ is defined by
$$
{\rm Cap}_{\overrightarrow{\bf p}(\cdot)}\,(B,\mathbb{R}^n)=\sup\left\{{\rm Cap}_{\overrightarrow{\bf p}(\cdot)}\,(K,\mathbb{R}^n)\;\Big |\;B\supset K,\;K \;\mbox{is a compact set K}\right\}.
$$
The space of all bounded Radon measures which are diffuse with respect to the $\overrightarrow{\bf p}(\cdot)$--capacity will be denoted as $\mathcal{M}^b_{\overrightarrow{\bf p}(\cdot)}(\mathbb{R}^n)$.

\begin{lem}\label{lemma-1}
	If $\mu\in L_1(\mathbb{R}^n)+W^{-1}_{\overrightarrow{\bf p}'(\cdot)}(\mathbb{R}^n)$, where $W^{-1}_{\overrightarrow{\bf p}'(\cdot)}(\mathbb{R}^n)$ is the dual space to $W_{\overrightarrow{\bf p}(\cdot)}^{1}(\mathbb{R}^n)$,
	then $\mu\in\mathcal{M}^b_{\overrightarrow{\bf p}(\cdot)}(\mathbb{R}^n)$.
\end{lem}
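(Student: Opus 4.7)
The plan is to pair $\mu$, written via the hypothesis as $\mu=f+f_0-{\rm div}\,\f$ with $f\in L_1(\RR^n)$, $f_0\in L_{p_0'(\cdot)}(\RR^n)$, and $\f=(f_1,\dots,f_n)\in\L_{\overrightarrow{\p}'(\cdot)}(\RR^n)$, with a sequence of test functions realising the vanishing $\overrightarrow{\bf p}(\cdot)$--capacity of a compact set. Since, by definition, the capacity of a Borel set is the supremum of the capacities of its compact subsets, and since the bounded Radon measure $|\mu|$ is inner regular, it is enough to establish $\mu(K)=0$ for every compact $K\subset\RR^n$ with ${\rm Cap}_{\overrightarrow{\bf p}(\cdot)}(K,\RR^n)=0$.

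Fix such a $K$ and, by the definition of capacity, pick $v_k\in S_{p(\cdot)}(K)$ with $\int_{\RR^n}{\bf P}(\x,v_k,\nabla v_k)\,d\x<1/k$. A routine truncation to $[0,1]$ followed by mollification within $C_0^{\infty}(\RR^n)$ allows us to assume $0\leq v_k\leq 1$ and $v_k\geq\chi_K$ while preserving the bound on the modular, so that $\rho_{p_0(\cdot)}(v_k)\to 0$ and $\rho_{p_i(\cdot)}((v_k)_{x_i})\to 0$ for each $i=1,\dots,n$. By the modular--norm comparison \eqref{st3}, this yields $v_k\to 0$ in ${W}^{1}_{\overrightarrow{\bf p}(\cdot)}(\RR^n)$, and, after extracting a subsequence (not relabeled), $v_k\to 0$ almost everywhere on $\RR^n$.

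Expanding through the decomposition,
\begin{equation*}
\int_{\RR^n}v_k\,d\mu=\int_{\RR^n}fv_k\,d\x+\int_{\RR^n}f_0v_k\,d\x+\sum_{i=1}^{n}\int_{\RR^n}f_i(v_k)_{x_i}\,d\x.
\end{equation*}
The second summand is estimated by H\"older's inequality \eqref{gel} as $2\|f_0\|_{p_0'(\cdot)}\|v_k\|_{p_0(\cdot)}\to 0$, and each derivative summand is controlled in the same way. For the $L_1$--term, $|fv_k|\leq|f|\in L_1(\RR^n)$ together with $v_k\to 0$ a.e.\ justifies dominated convergence and gives $\int_{\RR^n}fv_k\,d\x\to 0$. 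Hence $\int_{\RR^n}v_k\,d\mu\to 0$.

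For $\mu\geq 0$, the inequality $v_k\geq\chi_K$ combined with Fatou's lemma immediately yields $\mu(K)\leq\liminf\int v_k\,d\mu=0$. A signed bounded Radon measure is then handled by applying the same reasoning to the parts $\mu^\pm$ of its Jordan decomposition; the technical step that constitutes the main obstacle is to check that $\mu^+$ and $\mu^-$ each inherit a representation of the form $L_1(\RR^n)+W^{-1}_{\overrightarrow{\bf p}'(\cdot)}(\RR^n)$, the anisotropic variable--exponent analogue of the DMOP splitting. Once this is in place, $\mu(K)=0$ follows and, by inner regularity over compact subsets, $\mu(B)=0$ for every Borel $B$ with ${\rm Cap}_{\overrightarrow{\bf p}(\cdot)}(B,\RR^n)=0$, completing the proof.
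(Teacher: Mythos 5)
Your proof is correct for positive $\mu$ --- which is all the paper itself proves --- and it follows essentially the paper's route: pair the representation $\mu=f+f_0-\mathrm{div}\,\f$ with capacity test functions truncated to $[0,1]$, kill the $f_0$- and $\f$-terms with H\"older's inequality \eqref{gel}, use $v_k\geq\chi_K$ to get $\mu(K)=0$, and pass to Borel sets by taking the supremum over compact subsets. The one genuine difference is the treatment of the $L_1$-term: you extract an a.e.\ convergent subsequence (legitimate, since the modular of $v_k$ tends to zero, hence $v_k\to 0$ in measure) and apply dominated convergence, whereas the paper avoids a.e.\ convergence altogether by approximating $f$ by $f^m=T_m(f)\chi_{B(m)}$, absorbing $f^m$ into the $L_{p_0'(\cdot)}$-part, and taking iterated limits, first $j\to\infty$ (giving $\mu(K)\leq\|f-f^m\|_1$) and then $m\to\infty$; both devices work, yours being slightly more direct and the paper's needing no subsequence. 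Two smaller remarks. First, the mollification step as literally described can destroy $v_k\geq\chi_K$: $T_1\phi$ equals $1$ only on $K$, so averaging may drop below $1$ there; either truncate $2\phi$ at height $1$ (so the function equals $1$ on a neighbourhood of $K$) or, as the paper does, dispense with smoothing entirely, since the Lipschitz, compactly supported function $v^j=T_1\phi^j$ is admissible for the pairing. Second, your reduction of the signed case to a DMOP-type splitting of $\mu^{\pm}$ is left unproved and is in fact a substantive result; note, however, that the paper does no better here --- it explicitly carries out the argument only for positive $\mu$ --- so this deferral does not put your argument behind the paper's own.
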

\begin{proof}
	We perform the proof for a positive measure $\mu$.
	If $\mu\in L_1(\mathbb{R}^n)+W^{-1}_{\overrightarrow{\bf p}'(\cdot)}(\mathbb{R}^n)$, then there exist functions $f$, $f_0$, $\f$ such that
	\begin{equation}\label{us7}
	\mu=f+f_0-{\rm div}\, \f,\quad f\in L_1(\mathbb{R}^n),\quad f_0\in L_{p'_0(\cdot)}(\mathbb{R}^n),\quad \f=(f_1,\dots,f_n)\in {\L}_{\overrightarrow{\p}'(\cdot)}(\mathbb{R}^n).
	\end{equation}
	Consider a Borel subset $B\subset\mathbb{R}^n$ such that ${\rm Cap}_{\overrightarrow{\bf p}(\cdot)}\,(B,\mathbb{R}^n)=0$ and let $K\subset B$ be an arbitrary  compact set. Then ${\rm Cap}_{\overrightarrow{\bf p}(\cdot)}\,(K,\mathbb{R}^n)=0$.
	By definition, there exists a sequence $\{\phi^j\}\subset C^{\infty}_0(\mathbb{R}^n)$ satisfying $\phi^j\geq \chi_K$ and $\int\limits_{\mathbb{R}^n}{\bf P}(\x,\phi^j,\nabla \phi^j)\, d\x\rightarrow 0$ as $j\rightarrow\infty$.
	Assume $v^j=T_1\phi^j$. Then $0\leq v^j\leq 1$, $v^j=1$ in $K$, and $\int\limits_{\mathbb{R}^n}{\bf P}(\x,v^j,\nabla v^j)\, d\x\rightarrow 0$ as $j\rightarrow\infty$.
	This implies that $\|v^j\|_{p_0(\cdot)}+
	\|\nabla v^j\|_{\overrightarrow{\p}(\cdot)}\rightarrow 0$ as $j\rightarrow\infty$.
	
	For the positive measure $\mu$, we get
	\begin{align}
	\notag
	\mu(K)=\int\limits_K
	d\mu\le\int\limits_{R^n}v_jd\mu
	&=
	\int\limits_{R^n} (f+f_0-{\rm div}\;\f)v^j\, d\x
	\\
	\label{mu}
	&\leq
	\int\limits_{\mathbb{R}^n} (|f|v^j+|f_0|v^j+|\f \cdot \nabla v^j|)\, d\x.
	\end{align}
	
	Let
	$$
	f^m(\x)=T_m(f)\chi_{ B(m)},\quad   B(m)=\{\x\in \mathbb{R}^n:|\x|<m\}, \quad m\in\mathbb{N}.
	$$
	It is not hard to see that
	\begin{equation}\label{theorem_2(1-0)}
	f^m\rightarrow f \quad \mbox {in}  \quad L_1(\mathbb{R}^n),\quad m\rightarrow\infty,
	\end{equation}
	and
	\begin{equation}\label{theorem_2(1-1)}
	|f^m(\x)|\leq |f(\x)|,\quad |f^m(\x)|\leq m\chi_{ B(m)},\quad \x\in \mathbb{R}^n,\quad m\in\mathbb{N}.
	\end{equation}
	Then, applying \eqref{gel}, we derive from \eqref{mu} the following inequality:
	$$
	\mu(K)\leq \|v^j\|_{\infty}\|f-f^m\|_{1}+2(\|f^m\|_{p'_0(\cdot)}+\|f_0\|_{p'_0(\cdot)})\|v^j\|_{p_0(\cdot)}+
	2\|\f\|_{\overrightarrow{\p}'(\cdot)}\|\nabla v^j\|_{\overrightarrow{\p}(\cdot)}.
	$$
	First, taking the limit as $j\rightarrow\infty$, we obtain
	$$
	\mu(K)\leq \|f-f^m\|_{1}.
	$$
	Second, taking the limit as $m\rightarrow\infty$, we get $\mu(K)=0$.
	Therefore,
	$$
	\mu(B)=\sup\{\mu(K)\;\Big |\;B\supset K,\;K \;\mbox{is a compact set}\}=0.
	\eqno\qedhere
	$$
\end{proof}

Let us state an embedding theorem for the space $\mathring{H}_{\overrightarrow{\p}(\cdot)}^1(\Omega)$, see \cite[Theorem 2.5]{Fan}.
\begin{lem}\label{lemma_0}
	Let $\Omega$ be a bounded domain, $\overrightarrow{\p}(\cdot)=(p_1(\cdot),\dots,p_n(\cdot))\in (C^+(\overline{\Omega}))^n$, and
	$$
	\overline{p}(\x)={n}\left(\sum\limits_{i=1}^n 1/p_i(\x)\right)^{-1},\quad
	p_*(\x)=\left\{\begin{array}{ll}\frac{n\overline{p}(\x)}{n-\overline{p}(\x)},& \overline{p}(\x)<n,\\
	+\infty,& \overline{p}(\x)\geq n,
	\end{array}\right.
	$$
	$$
	p_{\infty}(\x)=\max\{p_*(\x),p_+(\x)\},\quad \x\in \Omega.
	$$
	If $q\in C^+(\overline{\Omega})$ and
	\begin{equation*}\label{kozhelm-1}
	q(\x)<p_{\infty}(\x)\quad \forall\;\x\in \Omega,
	\end{equation*}
	then the embedding $\mathring	{H}_{\overrightarrow{\p}(\cdot)}^1(\Omega)\hookrightarrow L_{q(\cdot)}(\Omega)$ is continuous and compact.
\end{lem}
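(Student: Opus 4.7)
The plan is a standard localization: cover $\overline\Omega$ by finitely many small balls on which the variable exponents are essentially constant, apply the classical constant-exponent anisotropic Sobolev--Troisi embedding on each ball, and glue back via a subordinate partition of unity. Since $q$ and $p_\infty$ are continuous on the compact set $\overline\Omega$ and $q<p_\infty$ pointwise, a compactness argument produces a uniform gap $\varepsilon>0$ with $q(\x)+\varepsilon\le p_\infty(\x)$. By uniform continuity of $p_1,\dots,p_n$ I then pick $\delta>0$ so small that on any ball $B=B_r(\x_0)$ of sufficiently small radius the exponents $p_i(\cdot)$ can be replaced by constants $p_i^\pm=p_i(\x_0)\pm\delta$ and $q(\cdot)$ by its local supremum $q^+$, while the strict inequality $q^+<p_\infty(\x_0)-\delta$ still holds.

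On such a ball, for $w\in C_0^\infty(B)$, the classical Troisi inequality yields
$$\|w\|_{L^{(\overline{p^-})_*}(B)}\le C\prod_{i=1}^n\|w_{x_i}\|_{L^{p_i^-}(B)}^{1/n}$$
when the harmonic mean $\overline{p^-}<n$; when $\overline{p^-}\ge n$ one has embeddings into every $L^s(B)$, $s<\infty$; and when the Sobolev option alone does not deliver a large enough target exponent, the Poincaré inequality $\|w\|_{L^{p_i^-}(B)}\le Cr\|w_{x_i}\|_{L^{p_i^-}(B)}$ provides the embedding into $L^{\min_i p_i^-}(B)$. The definition $p_\infty=\max\{p_*,p_+\}$ encodes exactly this dichotomy. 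The constant-exponent estimate is then translated into a variable-exponent modular estimate on $B$ via \eqref{st3} and Hölder's inequality with respect to the finite measure $|B|$, converting $\rho_{p_i^-,B}$ into a quantity controlled by $\rho_{p_i(\cdot),B}$ up to a constant depending on $\delta$ and $|B|$.

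Globalization uses a finite cover $\{B_k\}$ of $\overline\Omega$ with subordinate partition of unity $\{\eta_k\}$: decompose $v\in C_0^\infty(\Omega)$ as $v=\sum_k\eta_k v$, apply the local bound to each $\eta_k v$, and absorb the lower-order term $v\nabla\eta_k$ via Poincaré. Density of $C_0^\infty(\Omega)$ in $\mathring H^1_{\overrightarrow{\p}(\cdot)}(\Omega)$ extends the continuous embedding from test functions to the whole space. For compactness of a bounded sequence $\{v^j\}$, on each $B_k$ the functions $\eta_k v^j$ are bounded in a constant-exponent anisotropic Sobolev space built from $\mathring W^{1,p_i^-}$, so the classical Rellich--Kondrachov theorem yields a subsequence strongly convergent in $L^{q^+}(B_k)$; the elementary bound $|u|^{q(\x)}\le 1+|u|^{q^+}$ upgrades this to modular convergence in $L^{q(\cdot)}(B_k)$, and a finite-sum argument concludes. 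The main obstacle is the exponent bookkeeping in the localization step: the gap $\varepsilon$ must survive the errors introduced by freezing exponents, which forces a careful two-case analysis driven by the max in the definition of $p_\infty$.
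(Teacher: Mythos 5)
The paper does not prove Lemma~\ref{lemma_0} at all: it is quoted as a known result with the pointer ``see \cite[Theorem 2.5]{Fan}'', so there is no in-text argument to compare your proposal against. Your localization strategy --- freeze the exponents on small balls using uniform continuity of the $p_i$, invoke the classical constant-exponent Troisi anisotropic embedding (respectively the Poincar\'e/$p_+$ branch, matching the $\max$ in the definition of $p_\infty$), patch with a finite partition of unity, extend by density, and obtain compactness locally from constant-exponent Rellich--Kondrachov --- is precisely the standard route to such variable-exponent embeddings, and it is very likely the same skeleton as in Fan's cited theorem. In that sense the proposal is a reasonable reconstruction of the intended proof rather than an alternative to one.

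Two details in your sketch deserve tightening. First, the uniform gap: $p_\infty$ is not continuous as a real-valued function (it jumps to $+\infty$ where $\overline{p}(\x)\ge n$), so the argument should appeal to lower semicontinuity of $p_\infty$ (continuity into $(1,\infty]$) together with continuity and boundedness of $q$, giving that $p_\infty-q$ attains a minimum on the compact $\overline\Omega$; also note the lemma's hypothesis is stated on $\Omega$, not $\overline\Omega$, so one should check (or take as implicit, as Fan does) that the strict inequality persists up to the boundary before asserting $\varepsilon>0$. Second, when freezing exponents to the lowered values $p_i^-=p_i(\x_0)-\delta$, the map $\overline p\mapsto p_*=n\overline p/(n-\overline p)$ is very steep near $\overline p=n$, so the bookkeeping must choose $\delta$ small depending on $\inf_{\overline\Omega}(p_\infty-q)$, not just on the moduli of continuity of the $p_i$; your phrase ``while the strict inequality $q^+<p_\infty(\x_0)-\delta$ still holds'' glosses over the fact that subtracting $\delta$ from $p_\infty(\x_0)$ is not the right error bound --- the relevant perturbation of $p_*$ is of a different order --- and this is exactly where a careless choice of $\delta$ would break the argument.
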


\section{Assumptions and main results}\label{s3}

We will suppose that
\begin{equation}\label{us0}
p_+(\x)\leq p_0(\x),\quad \x\in \mathbb{R}^n.
\end{equation}
It is assumed that the functions
$$
{\a}(\x,s_0,\s)=(a_1(\x,s_0,\s),\ldots,a_n(\x,s_0,\s)):\mathbb{R}^n\times\mathbb{R}\times\mathbb{R}^n\rightarrow \mathbb{R}^n,
$$
$$
b(\x,s_0,\s):\mathbb{R}^n\times\mathbb{R}\times\mathbb{R}^n\rightarrow \mathbb{R},
$$
from equation \eqref{ur} are measurable with respect to $\x\in \mathbb{R}^n$ for all $s_0\in\mathbb{R},\; \mathrm{s}=(s_1,\ldots,s_{n})\in\mathbb{R}^{n}$, and continuous with respect to $(s_0,\mathrm{s})\in\mathbb{R}^{n+1}$ for a.e.\ $\x\in\mathbb{R}^n$.
Assume that there exist nonnegative functions $\Phi_i\in L_{p'_i(\cdot)}(\mathbb{R}^n)$, $\phi\in L_1(\mathbb{R}^n)$, continuous nondecreasing functions $\widehat{a}_i:\mathbb{R}^+\rightarrow \mathbb{R}^+\setminus\{0\}$, $i=1,\ldots,n$, and a positive number $\overline{a}$ such that for a.e.\ $\x\in\mathbb{R}^n$ and all $s_0\in \mathbb{R},\;$  $\s,\t\in\mathbb{R}^n$ the following inequalities are satisfied:
\begin{equation}\label{us1}
|a_i(\x,s_0,\s)|\leq \widehat{a}_i(|s_0|)\left((\P(\x,\s))^{1/p'_i(\x)}
+\Phi_i(\x)\right),\quad i=1,\ldots,n;
\end{equation}
\begin{equation}\label{us2}
(\a(\x,s_0,\s)-\a(\x,s_0,\t))\cdot(\s-\t)>0,\quad \s\neq \t;
\end{equation}
\begin{equation}\label{us3}
\a(\x,s_0,\s)\cdot\s\geq \overline{a}\P(\x,\s)-\phi(\x).
\end{equation}

Furthermore, assume the existence of a nonnegative function $\Phi_0\in L_{1}(\mathbb{R}^n)$ and a continuous nondecreasing function $\widehat{b}:\mathbb{R}^+\rightarrow \mathbb{R}^+$ such that for a.e.\ $\x\in\mathbb{R}^n$ and all $s_0\in \mathbb{R},\;$ $\s\in\mathbb{R}^n$ there hold
\begin{equation}\label{us4}
|b(\x,s_0,\s)|\leq \widehat{b}(|s_0|)\left(\P(\x,\s)
+\Phi_0(\x)\right);
\end{equation}
\begin{equation}\label{us5}
b(\x,s_0,\s)s_0\geq 0.
\end{equation}

As an example, one can consider functions
$$
a_i(\x,s_0,\s)=\widehat{a}_i(|s_0|)\left(\P(\x,\s)^{1/p'_i(\x)}{\rm sign}\,s_i+\Phi_i(\x)\right)
,\quad i=1,\ldots,n,
$$
$$
b(\x,s_0,\s)=b(s_0)\P(\x,\s)^{1/q'(\x)}\Phi_0^{1/q(\x)}(\x),
$$
with a continuous nondecreasing odd function $b:\mathbb{R}\rightarrow \mathbb{R},\;$
$q\in C^+(\mathbb{R}^n)$, and a nonnegative function $\Phi_0\in L_{1}(\mathbb{R}^n)$.

We will assume that a measure $\mu$ has the form \eqref{us7}.
Denoting
$\widetilde{\a}(\x,s_0,\s)=\a(\x,s_0,\s)+\f$, we obtain from equation \eqref{ur} that
$$-{\rm div}\,\widetilde{\a}(\x,u,\nabla
u)+|u|^{p_0(\x)-2}u+b(\x,u,\nabla
u)=f+f_0.
$$
Applying inequality \eqref{ung}, we easily notice that the function $\widetilde{\a}(\x,s_0,\s)$ also obeys assumptions of the type \eqref{us1}--\eqref{us3}.
Therefore, we will consider equation \eqref{ur} with the measure
\begin{equation}\label{us6}
\mu=f+f_0,\quad f\in L_1(\mathbb{R}^n),\quad f_0\in L_{p'_0(\cdot)}(\mathbb{R}^n).
\end{equation}

Define the function
$T_k(r)=\max(-k,\min(k,r))$
and introduce the notation
$\langle u \rangle=\int\limits_{\mathbb{R}^n}u \, d\x$.
By ${\mathcal{T}}_{\overrightarrow{\bf p}(\cdot)}^1(\mathbb{R}^n)$ we denote the set of measurable functions
$u:$ $\mathbb{R}^n\to\RR$ such that $T_k(u)\in
{W}_{\overrightarrow{\bf p}(\cdot)}^1(\mathbb{R}^n)$ for any $k>0$.
For $u\in {\mathcal{T}}_{\overrightarrow{\bf p}(\cdot)}^1(\mathbb{R}^n)$ and all $k>0$ we have
\begin{equation}\label{3_0}
\nabla T_k(u)=\chi_{\{|u|<k\}}\nabla u \in  \L_{\overrightarrow{\p}(\cdot)}(\mathbb{R}^n).
\end{equation}

\begin{defn}\label{D1}
	An entropy solution of equation	\eqref{ur}, \eqref{us6} is a function
	$u\in {\mathcal{T}}_{\overrightarrow{\bf p}(\cdot)}^1(\mathbb{R}^n)$ such that
	\begin{align*}
	&1)\quad b(\x,u,\nabla u)\in
	L_{1}(\mathbb{R}^n);\nonumber\\
	&2)\quad  \mbox{for all}\; k>0 \;\mbox{and} \; \xi\in
	C_0^1(\mathbb{R}^n) \;\mbox{the following inequality is satisfied:}
	\end{align*}
	\begin{equation}\label{intn}
	\langle (b(\x,u, \nabla u) +|u|^{p_0(\x)-2}u-f-f_0)T_k(u-\xi) \rangle+\langle \a(\x,u,\nabla u)\cdot\nabla T_k(u-\xi)
	\rangle\leq 0.
	\end{equation}
\end{defn}

\begin{defn}\label{D2}
	A renormalized solution of equation \eqref{ur}, \eqref{us6} is a function $u\in {\mathcal{T}}^1_{\overrightarrow{\bf p}(\cdot)}(\mathbb{R}^n)$ such that
	\begin{align*}
	&1)\quad   b(\x,u,\nabla u)\in
	L_{1}(\mathbb{R}^n);\\
	&2)\quad \lim_{h\rightarrow \infty}\int\limits_{\{h\leq |u|<  h+1\}}\P(\x,\nabla u) \, d\x=0;\nonumber\\
	&3)\quad \mbox{for any smooth function}\; S\in W^1_{\infty}(\mathbb{R})\;\mbox{with a compact support} \\
	&\mbox{and any function}\; \xi\in C_0^1(\mathbb{R}^n)\; \mbox{the following equality is satisfied:}
	\end{align*}
	\begin{align}
	\notag
	\langle(b(\x,u,\nabla u) &+|u|^{p_0(\x)-2}u-f-f_0)S(u)\xi\rangle
	\\
	&+\langle \a(\x,u,\nabla u)\cdot(S'(u)\xi\nabla u+S(u)\nabla\xi)\rangle= 0.
	\label{ren}
	\end{align}
\end{defn}

The main results of the present article are the following two theorems.
\begin{thm}\label{t1}
	Let assumptions \eqref{us0}--\eqref{us5} be satisfied.
	Then there exists an entropy solution of equation \eqref{ur},  \eqref{us6}.
\end{thm}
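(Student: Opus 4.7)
The plan is to build the entropy solution as the almost-everywhere limit of weak solutions of a doubly regularized problem on a sequence of enlarging balls. For each $m\in\NN$ I would set $f^m=T_m(f)\chi_{B(m)}$ and $f_0^m=T_m(f_0)\chi_{B(m)}$ (so $f^m\to f$ in $L_1(\mathbb{R}^n)$ as in \eqref{theorem_2(1-0)}--\eqref{theorem_2(1-1)} and $f_0^m\to f_0$ in $L_{p_0'(\cdot)}(\mathbb{R}^n)$), and regularize the awkward lower-order term by $b^m(\x,s_0,\s)=b(\x,s_0,\s)/(1+m^{-1}|b(\x,s_0,\s)|)$, which preserves \eqref{us4}, \eqref{us5} but makes $b^m$ bounded. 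On $B(m)$ the Dirichlet problem
\[
-\operatorname{div}\a(\x,u,\nabla u)+|u|^{p_0(\x)-2}u+b^m(\x,u,\nabla u)=f^m+f_0^m
\]
in $\mathring{H}_{\overrightarrow{\p}(\cdot)}^1(B(m))$ then falls, under \eqref{us1}--\eqref{us3} and \eqref{us5}, within the scope of classical pseudomonotone operator theory on the reflexive space ${W}_{\overrightarrow{\bf p}(\cdot)}^{1}(B(m))$, yielding a weak solution $u_m\in\mathring{H}_{\overrightarrow{\p}(\cdot)}^1(B(m))\cap L^\infty(B(m))$ which I extend by zero to all of $\mathbb{R}^n$.

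The a priori estimates come from testing the approximate equation with $T_k(u_m)$, $k\ge 1$: \eqref{us3} and \eqref{us5} bound the left-hand side below by $\overline a\,\langle\chi_{\{|u_m|<k\}}\P(\x,\nabla u_m)\rangle+\langle|u_m|^{p_0(\x)-2}u_m\,T_k(u_m)\rangle-\|\phi\|_1$, while $\|T_k(u_m)\|_\infty\le k$ together with \eqref{gel} bound the right-hand side by $k\|f\|_1+2\|f_0\|_{p_0'(\cdot)}\|T_k(u_m)\|_{p_0(\cdot)}$; absorption via \eqref{st3} yields
\[
\sum_{i=1}^n\rho_{p_i(\cdot)}\bigl((T_k(u_m))_{x_i}\bigr)+\rho_{p_0(\cdot)}(T_k(u_m))\le C(1+k),
\]
uniformly in $m$. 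A Boccardo--Gallou\"et-type argument adapted to the anisotropic variable-exponent setting then provides Marcinkiewicz-type bounds for $\{u_m\}$, and an exponential test function of the form $\operatorname{sign}(u_m)\bigl(\exp(\widehat b(|u_m|)|u_m|/\overline a)-1\bigr)$ absorbs the $\P(\x,\nabla u)$-growth of $b$ in \eqref{us4}, producing a uniform $L_1$-bound on $b^m(\x,u_m,\nabla u_m)$.

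A diagonal subsequence converges a.e.\ to some $u\in\mathcal{T}_{\overrightarrow{\bf p}(\cdot)}^1(\mathbb{R}^n)$, with $T_k(u_m)\rightharpoonup T_k(u)$ in ${W}_{\overrightarrow{\bf p}(\cdot)}^{1}(B(R))$ for every $R,k>0$. The main obstacle is the a.e.\ convergence $\nabla u_m\to\nabla u$. I would establish it via a Boccardo--Murat / Minty scheme localized by a smooth cut-off $\psi_R$: test the approximate equation with $T_\sigma(T_k(u_m)-T_k(u))\psi_R$, pass $m\to\infty$ and then $\sigma\to 0$, and use strict monotonicity \eqref{us2} to deduce
\[
\limsup_{m\to\infty}\int_{B(R)}\bigl(\a(\x,T_k(u_m),\nabla T_k(u_m))-\a(\x,T_k(u_m),\nabla T_k(u))\bigr)\cdot\nabla\bigl(T_k(u_m)-T_k(u)\bigr)\,d\x\le 0;
\]
the classical Leray--Lions lemma then gives a.e.\ convergence of $\nabla T_k(u_m)$ for each $k$ and, letting $k\to\infty$, of $\nabla u_m$. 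Relative to the bounded-domain works \cite{Benboubker_2015,Azroul_2018}, the genuinely new difficulty is that the tail terms on $\{|\x|>R\}$ must vanish as $R\to\infty$; here the absorbing zero-order term $|u|^{p_0(\x)-2}u$ combined with $f_0\in L_{p_0'(\cdot)}(\mathbb{R}^n)$ supplies the decay at infinity that was previously furnished by a bounded ambient domain. Once gradients converge a.e., passing to the limit in the weak equality tested by $T_k(u_m-\xi)$ is routine: Fatou on the nonnegative monotonicity contribution, dominated/Vitali convergence on the remaining pieces (using the uniform $L_1$-bound on $b^m(\x,u_m,\nabla u_m)$), and a last Fatou step deliver \eqref{intn} together with $b(\x,u,\nabla u)\in L_1(\mathbb{R}^n)$.
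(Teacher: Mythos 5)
The decisive gap is in the existence of your approximate solutions. You keep $\a(\x,u,\nabla u)$ untruncated in its $s_0$-argument, yet assumption \eqref{us1} controls $|a_i(\x,s_0,\s)|$ only through $\widehat{a}_i(|s_0|)$ with $\widehat{a}_i$ an arbitrary continuous nondecreasing function, which may grow as fast as you like in $s_0$. Hence $u\mapsto\a(\x,u,\nabla u)$ need not map $\mathring{H}_{\overrightarrow{\p}(\cdot)}^1(B(m))$ into $\L_{\overrightarrow{\p}'(\cdot)}(B(m))$, the associated operator is neither well defined nor bounded on the energy space, and the ``classical pseudomonotone operator theory'' you invoke does not apply as stated. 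Your escape hatch --- that $u_m\in L^{\infty}(B(m))$ --- is asserted, not proved, and is circular: an $L^{\infty}$ bound presupposes a solution, whereas the existence theorem needs a bounded coercive operator on the whole space beforehand. This is precisely why the paper replaces $\a(\x,s_0,\s)$ by $\a^m(\x,s_0,\s)=\a(\x,T_m(s_0),\s)$ (and truncates $b$ and $f$) in the approximating equations \eqref{theorem_2(1-2)}, solved on all of $\mathbb{R}^n$ via Theorem \ref{t3}; the $s_0$-growth is then harmless and no $L^{\infty}$ information on $u^m$ is ever needed. Your exhaustion-by-balls scheme becomes viable only after you add the same truncation $T_m(s_0)$ inside $\a$ (or genuinely prove an a priori $L^{\infty}$ bound within the construction, a substantial extra task in this anisotropic variable-exponent setting that the statement does not require).

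A secondary point: the exponential test function ${\rm sign}(u_m)\bigl(\exp(\widehat{b}(|u_m|)|u_m|/\overline{a})-1\bigr)$ is both unnecessary and, absent the unproven $L^{\infty}$ bound, of doubtful admissibility, since $\widehat{b}$ is only continuous nondecreasing and the composite need not belong to the energy space. Under the sign condition \eqref{us5} the uniform $L_1$-bound on $b^m(\x,u_m,\nabla u_m)$ comes directly, as in the paper: testing with $T_k(u_m)$ yields $k\int_{\{|u_m|\geq k\}}|b^m|\,d\x\leq C(k)$, while on $\{|u_m|<k\}$ the growth condition \eqref{us4} combined with the gradient estimate \eqref{theorem_2(2-6)} controls $|b^m|$, cf.\ \eqref{theorem_2(2-7)}--\eqref{theorem_2(2-8)}. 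Apart from these issues, your outline (a priori estimates from $T_k$-tests with the $f_0$-term absorbed by the zero-order term $|u|^{p_0(\x)-2}u$ via Young's inequality, local compactness and a.e.\ convergence of $u_m$, a localized Minty/Leray--Lions argument for a.e.\ convergence of the gradients, and the limit passage in the identity tested with $T_k(u_m-\xi)$, where the $f_0$-term is handled by the weak $L_{p_0(\cdot)}$-convergence of $T_k(u_m-\xi)$) matches the paper's Steps 2--6, so once the existence of the approximations is repaired the rest of your plan goes through.
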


\begin{thm}\label{t2}
	Let assumptions \eqref{us0}--\eqref{us5} be satisfied.
	Then the entropy solution obtained in Theorem \ref{t1} is a renormalized solution of equation \eqref{ur},  \eqref{us6}.
\end{thm}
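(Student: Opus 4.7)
The plan is to obtain \eqref{ren} by passing to the limit in the weak identity satisfied by the bounded approximate solutions $\{u_m\}$ already constructed to prove Theorem~\ref{t1}, rather than trying to derive \eqref{ren} from \eqref{intn} directly. Since each $u_m$ is bounded, $S(u_m)\xi$ is an admissible test function, giving
\begin{equation*}
\langle(b(\x,u_m,\nabla u_m)+|u_m|^{p_0(\x)-2}u_m-f_m-f_{0,m})S(u_m)\xi\rangle
+\langle\a(\x,u_m,\nabla u_m)\cdot(S'(u_m)\xi\nabla u_m+S(u_m)\nabla\xi)\rangle=0.
\end{equation*}

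First I would check item 2 of Definition~\ref{D2} directly from \eqref{intn}. Choose $k=1$ and, by the customary density argument extending the class of admissible $\xi$, use $\xi\approx T_h(u)\eta_R$ with $\eta_R\in C_0^\infty(B(2R))$, $\eta_R\equiv1$ on $B(R)$. Since $T_1(u-T_h(u))$ carries the sign of $u$ on $\{|u|\ge h\}$, vanishes on $\{|u|<h\}$, and satisfies $\nabla T_1(u-T_h(u))=\chi_{\{h\le|u|<h+1\}}\nabla u$, the sign condition \eqref{us5} together with the monotonicity of $s_0\mapsto|s_0|^{p_0(\x)-2}s_0$ kills the two lower-order sign-definite terms on the left of \eqref{intn}, and coercivity \eqref{us3} yields, after $R\to\infty$,
\begin{equation*}
\overline a\int_{\{h\le|u|<h+1\}}\P(\x,\nabla u)\,d\x
\le\int_{\{|u|\ge h\}}(|f|+|f_0|+\phi)\,d\x.
\end{equation*}
The right-hand side vanishes as $h\to\infty$ by absolute continuity of the $L_1$- and $L_{p'_0(\cdot)}$-integrals, using $\operatorname{meas}\{|u|\ge h\}\to 0$, which follows from the Marcinkiewicz-type a priori estimates produced in the proof of Theorem~\ref{t1}.

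Second, I would pass to the limit as $m\to\infty$ in the approximate identity displayed above. Because $\operatorname{supp}S\subset[-M,M]$, the factors $S(u_m)$ and $S'(u_m)$ reduce every gradient of $u_m$ appearing on the left to $\nabla T_M u_m$. The convergences $u_m\to u$ a.e., $\nabla T_k u_m\to\nabla T_k u$ a.e.\ for each $k$, $\a(\x,T_k u_m,\nabla T_k u_m)\to\a(\x,T_k u,\nabla T_k u)$ weakly in $\L_{\overrightarrow{\p}'(\cdot)}(\mathbb{R}^n)$, and $b(\x,u_m,\nabla u_m)\to b(\x,u,\nabla u)$ in $L_1(\mathbb{R}^n)$ are all furnished by the proof of Theorem~\ref{t1}. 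Together with $f_m\to f$ in $L_1$, $f_{0,m}\to f_0$ in $L_{p'_0(\cdot)}$, the boundedness of $S$ and $S'$, and the compact support of $\xi$, these convergences let one pass to the limit in every term via dominated convergence and thus obtain \eqref{ren}.

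The principal obstacle is the passage to the limit in $\langle\a(\x,u_m,\nabla u_m)\cdot S'(u_m)\xi\nabla u_m\rangle$, which demands strong (not merely weak) convergence of $\nabla T_M u_m$ in $\L_{\overrightarrow{\p}(\cdot)}(\mathbb{R}^n)$ on $\operatorname{supp}\xi$. This is the familiar Minty–Leray–Lions/Boccardo–Murat mechanism based on monotonicity \eqref{us2} together with the identification of $\limsup\langle\a(\x,T_Mu_m,\nabla T_Mu_m)\cdot\nabla T_Mu_m\rangle$ via the entropy framework, the equi-integrability supplied by the decay estimate of the first step playing the role needed in the Vitali argument; this strong convergence should already be available inside the proof of Theorem~\ref{t1}, so it is invoked here rather than re-derived. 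The remaining work is the purely technical justification of the spatial cutoff $\eta_R$ when using $T_h(u)$ as a surrogate $C_0^1$ test function.
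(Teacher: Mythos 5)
Your proposal is correct and follows essentially the same route as the paper: condition 2) of Definition \ref{D2} is precisely the paper's Lemma \ref{lemma_3}, obtained from \eqref{intn} with $\xi=T_h(u)$ (admissible by Lemma \ref{lemma_6}, so your spatial cutoff $\eta_R$ is not needed), and \eqref{ren} is derived exactly as in the paper by testing the approximate identity \eqref{theorem_2(1-5)} with $S(u^m)\xi$ and passing to the limit. The convergences you invoke (a.e.\ convergence of $u^m$, strong local convergence of $\nabla T_M(u^m)$, weak convergence of the flux in $\L_{\overrightarrow{\p}'(\cdot)}(\mathbb{R}^n)$, and local $L_1$ convergence of the lower-order terms) are indeed those furnished in the proof of Theorem \ref{t1}, so no gap remains.
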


Previously, statements similar to Theorems \ref{t1}, \ref{t2} on solutions of the Dirichlet problem for equation \eqref{ur} with $\mu\in L_1(\Omega)$ have been proved by the author in \cite{Kozh_2019_ms}, \cite{Kozh_2019}.
Therefore, to avoid a repetition, in the present article we will mainly concentrate on changes caused by the presence of the term $f_0\in L_{p'_0(\cdot)}(\mathbb{R}^n)$ in the right-hand side of equation \eqref{ur}, \eqref{us6}, and on differences caused by the absence of the boundary conditions \eqref{gu}.

\section{Preliminaries}\label{s4}
Applying \eqref{sum}, we obtain from inequalities \eqref{us1} that for a.e.\ $\x\in\mathbb{R}^n$ and all $(s_0,\s)\in\mathbb{R}^{n+1}$ there holds
$$
|a_i(\x,s_0,\s)|^{p'_i(\x)}\leq \widehat{A}_i(|s_0|)
\left(\P(\x,\s)+\Psi_i(\x)\right),\quad
i=1,\ldots,n,
$$
where each $\Psi_i\in L_1(\mathbb{R}^n)$ is a nonnegative function and each $\widehat{A}_i:\mathbb{R}^+\rightarrow \mathbb{R}^+\setminus\{0\}$ is a continuous nondecreasing function.
This implies the inequality
\begin{equation}\label{P's}
\P'(\x,\a(\x,s_0,\s))\leq \widehat{A}^n(|s_0|)(\P(\x,\s)+\Psi^n(\x)),
\end{equation}
where $\widehat{A}^n(|s_0|)=\sum\limits_{i=1}^n\widehat{A}_i(|s_0|)$, $\Psi^n(\x)=\sum\limits_{i=1}^n\Psi_i(\x)$, $\P'(\x,\s)=\sum\limits_{i=1}^n|s_i|^{p'_i(\x)}$.
From \eqref{3_0}, \eqref{P's} we deduce that
\begin{equation}\label{axN}
\chi_{\{|u|< k\}}\a(\x,u, \nabla u)\in
\L_{\overrightarrow{\p}'(\cdot)}(\mathbb{R}^n)
\quad \text{for any} \; k > 0.
\end{equation}

It is assumed that the integral of $|u|^{p_0(\x)-2}u T_k(u-\xi)$ in inequality \eqref{intn} is convergent for any $k>0$. The convergence of the remaining integrals in \eqref{intn} follows from \eqref{us6}, \eqref{3_0}, \eqref{axN}, and assumption 1) of Definition \ref{D1}.
The convergence of all integrals in equality \eqref{ren} follows from \eqref{us6}, \eqref{3_0}, \eqref{axN}, and assumption 1) of Definition \ref{D2}.

All constants which will appear in the sequel are assumed to be positive.

\begin{lem}\label{lemma_1}
	Let $u$ be an entropy solution of equation \eqref{ur},  \eqref{us6}.
	Then for any $k>0$ the following inequality is satisfied:
	\begin{equation}\label{lemma_1_}
	\int\limits_{\{ |u|<k\}}{\bf P}(\x,u,\nabla u)\, d\x+k\int\limits_{\{ |u|\geq k\}}|u|^{p_0(\x)-1} \, d\x\leq C_1 k+C_2.
	\end{equation}
\end{lem}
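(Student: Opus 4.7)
The natural plan is to test the entropy inequality with the trivial function $\xi\equiv 0$, estimate each term using the structural hypotheses, and finally absorb the $f_0$ contribution via H\"older and Young.

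Taking $\xi=0$ in \eqref{intn}, one obtains
\begin{equation*}
\langle b(\x,u,\nabla u)T_k(u)\rangle+\langle|u|^{p_0(\x)-2}uT_k(u)\rangle+\langle \a(\x,u,\nabla u)\cdot\nabla T_k(u)\rangle\leq\langle (f+f_0)T_k(u)\rangle.
\end{equation*}
By \eqref{us5}, $b(\x,u,\nabla u)T_k(u)\geq 0$ since $T_k(u)$ has the same sign as $u$, so that term can be dropped. A direct computation shows
\begin{equation*}
|u|^{p_0(\x)-2}u\, T_k(u)=\chi_{\{|u|<k\}}|u|^{p_0(\x)}+k\,\chi_{\{|u|\geq k\}}|u|^{p_0(\x)-1},
\end{equation*}
while \eqref{3_0} together with coercivity \eqref{us3} yields
\begin{equation*}
\a(\x,u,\nabla u)\cdot\nabla T_k(u)\geq\chi_{\{|u|<k\}}\bigl(\overline{a}\,\P(\x,\nabla u)-\phi(\x)\bigr).
\end{equation*}
Setting
\begin{equation*}
I(k)=\int\limits_{\{|u|<k\}}|u|^{p_0(\x)}\,d\x+k\int\limits_{\{|u|\geq k\}}|u|^{p_0(\x)-1}\,d\x+\overline{a}\int\limits_{\{|u|<k\}}\P(\x,\nabla u)\,d\x,
\end{equation*}
I would then arrive at $I(k)\leq\|\phi\|_1+\langle(f+f_0)T_k(u)\rangle$.

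For the right-hand side, the bound $|T_k(u)|\leq k$ gives $|\langle fT_k(u)\rangle|\leq k\|f\|_1$. The harder term is $\langle f_0 T_k(u)\rangle$; this is where I expect the main work. By H\"older \eqref{gel}, $|\langle f_0 T_k(u)\rangle|\leq 2\|f_0\|_{p'_0(\cdot)}\|T_k(u)\|_{p_0(\cdot)}$, and by \eqref{st3},
\begin{equation*}
\|T_k(u)\|_{p_0(\cdot)}\leq\bigl(\rho_{p_0(\cdot)}(T_k(u))+1\bigr)^{1/p_0^-}.
\end{equation*}
The key observation is that on $\{|u|\geq k\}$ one has $k^{p_0(\x)}\leq k\,|u|^{p_0(\x)-1}$, so $\rho_{p_0(\cdot)}(T_k(u))\leq I(k)/\min(1,\overline{a})$. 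Hence there is a constant $C$ with $\|T_k(u)\|_{p_0(\cdot)}\leq C(I(k)+1)^{1/p_0^-}$.

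Putting everything together,
\begin{equation*}
I(k)\leq\|\phi\|_1+k\|f\|_1+2C\|f_0\|_{p'_0(\cdot)}\bigl(I(k)+1\bigr)^{1/p_0^-}.
\end{equation*}
Since $p_0^->1$, the last term is sublinear in $I(k)$ and can be absorbed by Young's inequality: choosing $\epsilon>0$ sufficiently small,
\begin{equation*}
2C\|f_0\|_{p'_0(\cdot)}\bigl(I(k)+1\bigr)^{1/p_0^-}\leq\epsilon(I(k)+1)+C(\epsilon)\|f_0\|_{p'_0(\cdot)}^{(p_0^-)'}.
\end{equation*}
Rearranging yields $I(k)\leq C_1 k+C_2$ with constants depending only on $\|f\|_1$, $\|f_0\|_{p'_0(\cdot)}$, $\|\phi\|_1$, $\overline{a}$ and $p_0^-$. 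Since the left-hand side of \eqref{lemma_1_} is controlled by $I(k)/\min(1,\overline{a})$, the claim follows. The main technical obstacle is precisely the absorption step above, which requires the variable-exponent norm estimate \eqref{st3} and the condition $p_0^->1$; beyond that, the argument is a straightforward test-function computation.
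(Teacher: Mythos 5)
Your argument is correct and reaches the same conclusion, but you handle the $f_0$ term by a different route than the paper. You apply H\"older's inequality \eqref{gel} to get $|\langle f_0 T_k(u)\rangle|\leq 2\|f_0\|_{p'_0(\cdot)}\|T_k(u)\|_{p_0(\cdot)}$, then convert the norm to the modular via \eqref{st3}, observe that $\rho_{p_0(\cdot)}(T_k(u))$ is controlled by the good terms (using $k^{p_0(\x)}\leq k\,|u|^{p_0(\x)-1}$ on $\{|u|\geq k\}$), and close with a numerical Young's inequality exploiting $p_0^->1$. The paper instead applies the \emph{pointwise} Young's inequality \eqref{ung} directly under the integral sign, splitting $|f_0 T_k(u)|\leq C(\varepsilon)|f_0|^{p'_0(\x)}+\varepsilon|T_k(u)|^{p_0(\x)}$ and then absorbing the $\varepsilon$-term after the same observation on $\{|u|\geq k\}$. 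The paper's route is a bit shorter since it never needs to pass through Luxemburg norms or \eqref{st3}, whereas yours is a slightly longer but equally valid detour; both absorptions implicitly rely on the a priori finiteness of $\int_{\{|u|<k\}}|u|^{p_0(\x)}\,d\x+k\int_{\{|u|\geq k\}}|u|^{p_0(\x)-1}\,d\x$, which is guaranteed by the convention stated in Section~\ref{s4} that the integral of $|u|^{p_0(\x)-2}uT_k(u-\xi)$ converges. One small remark: the factor $\min(1,\overline{a})$ in your bound $\rho_{p_0(\cdot)}(T_k(u))\leq I(k)/\min(1,\overline{a})$ is superfluous, since $\rho_{p_0(\cdot)}(T_k(u))$ is already bounded by the first two summands of $I(k)$; it is harmless, though, and you do need it in the final step to pass from $I(k)$ to the left-hand side of \eqref{lemma_1_} where $\P(\x,\nabla u)$ appears without the coefficient $\overline{a}$.
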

\begin{proof}
	Inequality \eqref{intn} with $\xi=0$ has the form
	\begin{align*}
	I=\int\limits_{\mathbb{R}^n}(|u|^{p_0(\x)-2}u + b(\x,u,\nabla
	u))T_k(u) \, d\x
	&+\int\limits_{\{ |u|<k\}}\a(\x,u,\nabla
	u)\cdot \nabla u \, d\x \\
	&\leq\int\limits_{\mathbb{R}^n}(f+f_0)
	T_{{k}}(u)\, d\x=J.
	\end{align*}
	
	Applying inequalities \eqref{us3}, \eqref{us5}, we estimate $I$ as
	\begin{equation}
	I\geq k\int\limits_{\{ |u|\geq k\}}|u|^{p_0(\x)-1}\, d\x+\int\limits_{\{ |u|<k\}} |u|^{p_0(\x)}\, d\x+\overline{a}\int\limits_{\{|u|<k\}} \P(\x,\nabla u)\, d\x -\|\phi\|_1.\label{lemma_1(1)}
	\end{equation}
	Using inequality \eqref{ung}, we obtain the following bound for $J$ for any $\varepsilon>0$:
	\begin{align}
	\notag
	&|J|\\
	&\leq \int\limits_{\mathbb{R}^n }|f||T_{k}(u)|\, d\x+C(\varepsilon)\int\limits_{\mathbb{R}^n}|f_0|^{p_0'(\x)}\, d\x+\varepsilon\int\limits_{\{ |u|< k\}}|u|^{p_0(\x)}\, d\x+ \varepsilon\int\limits_{\{|u|\geq k\}}k^{p_0(\x)}\, d\x \nonumber\\
	&\leq k \int\limits_{\mathbb{R}^n}|f|\, d\x+C_3(\varepsilon)+\varepsilon\int\limits_{\{|u|< k\}}|u|^{p_0(\x)}\, d\x+ \varepsilon k\int\limits_{\{|u|\geq k\}}|u|^{p_0(\x)-1}\, d\x.\label{lemma_1(2)}
	\end{align}
	
	Combining \eqref{lemma_1(1)}, \eqref{lemma_1(2)}, and choosing an appropriate $\varepsilon$, we conclude that
	\begin{equation*}
	\frac{k}{2}\int\limits_{\{ |u|\geq k\}}|u|^{p_0(\x)-1}\, d\x+\frac{1}{2}\int\limits_{\{ |u|<k\}} |u|^{p_0(\x)}\, d\x+{\overline{a}}\int\limits_{\{ |u|<k\}} \P(\x,\nabla u)\, d\x \leq
	 k C_4+C_3.
	\end{equation*}
	Thus, the claimed inequality \eqref{lemma_1_} is proved.
\end{proof}

\begin{rem}\label{remark_1}
	If $u$ is an entropy solution of equation \eqref{ur}, \eqref{us6}, then it follows from Lemmas \ref{lemma_1} that
	\begin{equation}\label{remark_1_1}
	{\rm meas}\,\{|u|\geq  k\}\rightarrow 0, \quad k\rightarrow \infty;
	\end{equation}
	$$\forall k>0\quad |u|^{p_0(\x)-1}\chi_{\{|u|\geq k\}}\in L_1(\mathbb{R}^n).
	$$
	Moreover,
	$$
	\forall k>0\quad |u|^{p_0(\x)}\chi_{\{|u|< k\}}\in L_1(\mathbb{R}^n).
	$$
\end{rem}

\begin{lem}\label{lemma_6}
	Let $u$ be an entropy solution of equation \eqref{ur}, \eqref{us6}.
	Then inequality $\eqref{intn}$ is valid for any function $\xi\in {W}_{\overrightarrow{\bf p}(\cdot)}^1(\mathbb{R}^n)\cap L_{\infty}(\mathbb{R}^n)$.
\end{lem}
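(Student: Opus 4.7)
The plan is to approximate a given $\xi\in W_{\overrightarrow{\bf p}(\cdot)}^1(\mathbb{R}^n)\cap L_\infty(\mathbb{R}^n)$ by a sequence of admissible test functions in $C_0^1(\mathbb{R}^n)$ and to pass to the limit in \eqref{intn}. By the density of $C_0^\infty(\mathbb{R}^n)$ in $W_{\overrightarrow{\bf p}(\cdot)}^1(\mathbb{R}^n)$, combined with truncation at level $\|\xi\|_\infty+1$ and mollification, one constructs $\{\xi_m\}\subset C_0^\infty(\mathbb{R}^n)$ with $\xi_m\to\xi$ in $W_{\overrightarrow{\bf p}(\cdot)}^1(\mathbb{R}^n)$, $\xi_m\to\xi$ a.e.\ (passing to a subsequence), and $\|\xi_m\|_\infty\le M_0:=\|\xi\|_\infty+1$. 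Setting $K:=k+M_0+1$, the uniform inclusion $\{|u-\xi_m|<k\}\subset\{|u|<K\}$ is the source of the crucial a priori control on the integrands.

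\textbf{Limits of the lower-order terms.} For the $b$-term and the $f$-term, dominated convergence applies directly, since $b(\x,u,\nabla u), f\in L_1(\mathbb{R}^n)$, $|T_k(u-\xi_m)|\le k$, and $T_k(u-\xi_m)\to T_k(u-\xi)$ a.e. For the $f_0$-term, the Lipschitz bound $|T_k(u-\xi_m)-T_k(u-\xi)|\le|\xi_m-\xi|$ yields $T_k(u-\xi_m)\to T_k(u-\xi)$ in $L_{p_0(\cdot)}(\mathbb{R}^n)$, and \eqref{gel} closes the passage to the limit. For the zero-order term $\int|u|^{p_0(\x)-2}u\,T_k(u-\xi_m)\,d\x$, split at $\{|u|\ge K\}$: on that set $|u-\xi_m|\ge k$ forces $T_k(u-\xi_m)=k\,\mathrm{sign}\,u$ independent of $m$, and the integrand equals $k|u|^{p_0(\x)-1}$, which lies in $L_1$ by Remark~\ref{remark_1}; on $\{|u|<K\}$ the same remark shows $\chi_{\{|u|<K\}}|u|^{p_0(\x)-2}u\in L_{p_0'(\cdot)}(\mathbb{R}^n)$, so the convergence in $L_{p_0(\cdot)}$ suffices by \eqref{gel}.

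\textbf{Main obstacle: the divergence term.} The delicate limit is
$\int \a(\x,u,\nabla u)\cdot\nabla T_k(u-\xi_m)\,d\x$, since $\nabla T_k(u-\xi_m)=\chi_{\{|u-\xi_m|<k\}}(\nabla u-\nabla\xi_m)$ carries a characteristic function depending on $\xi_m$ that need not converge a.e.\ on the level set $\{|u-\xi|=k\}$. Introduce $\a_K:=\chi_{\{|u|<K\}}\a(\x,u,\nabla u)\in\L_{\overrightarrow{\p}'(\cdot)}(\mathbb{R}^n)$ by \eqref{axN}; the inclusion $\{|u-\xi_m|<k\}\subset\{|u|<K\}$ then gives
\begin{align*}
\int \a_K\cdot\nabla T_k(u-\xi_m)\,d\x
&=\int \a_K\cdot\chi_{\{|u-\xi|<k\}}(\nabla u-\nabla\xi_m)\,d\x\\
&\quad+\int \a_K\cdot\bigl[\chi_{\{|u-\xi_m|<k\}}-\chi_{\{|u-\xi|<k\}}\bigr](\nabla u-\nabla\xi_m)\,d\x.
\end{align*}
The first integral converges to $\int \a(\x,u,\nabla u)\cdot\nabla T_k(u-\xi)\,d\x$ by \eqref{gel}, using $\nabla\xi_m\to\nabla\xi$ in $\L_{\overrightarrow{\p}(\cdot)}(\mathbb{R}^n)$. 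For the second, the classical property $\nabla v=0$ a.e.\ on $\{v=c\}$ applied to $v=T_K(u)-\xi\in W_{\overrightarrow{\bf p}(\cdot)}^1(\mathbb{R}^n)$ with $c=\pm k$ gives $\nabla u=\nabla\xi$ a.e.\ on $\{|u-\xi|=k\}\cap\{|u|<K\}$, so the integrand on this level set reduces to $\a_K\cdot(\nabla\xi-\nabla\xi_m)$, which vanishes in $L_1$ by \eqref{gel}. Off the level set the characteristic function difference tends to $0$ a.e., and dominated convergence with the uniformly integrable majorant $|\a_K|(|\nabla T_K(u)|+|\nabla\xi_m|)$ eliminates this piece as well. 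The inequality $\le 0$ is preserved in the limit, so \eqref{intn} holds for the given $\xi$.
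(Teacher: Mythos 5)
Your argument is correct and is essentially the same density/passage-to-the-limit proof that the paper delegates to \cite{Kozh_2019} (Lemma 4.5 there): approximate $\xi$ by uniformly bounded functions from $C_0^{\infty}(\mathbb{R}^n)$, exploit the inclusion $\{|u-\xi_m|<k\}\subset\{|u|<K\}$ together with \eqref{axN} and Remark \ref{remark_1}, and neutralize the level set $\{|u-\xi|=k\}$ via $\nabla(T_K(u)-\xi)=0$ a.e.\ there. The only loose point, the $m$-dependent ``majorant'' in the final step, is repaired by splitting off the part containing $\nabla\xi_m-\nabla\xi$ and estimating it with \eqref{gel}, so it is not a genuine gap.
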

The proof of Lemma \ref{lemma_6} is similar to the proof of \cite[Lemma 4.5]{Kozh_2019}.

\begin{lem}\label{lemma_3}
	Let $u$ be an entropy solution of equation \eqref{ur}, \eqref{us6}.
	Then for all $k>0$ there holds
	\begin{equation}\label{lemma_3_}
	\lim_{h\to \infty}\int\limits_{\{ h\leq |u|<k+h\}}\P(\x,\nabla u) \, d\x=0.
	\end{equation}
\end{lem}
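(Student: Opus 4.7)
The plan is to test the entropy inequality \eqref{intn} with $\xi=T_h(u)$. This is admissible by Lemma \ref{lemma_6}, because $T_h(u)\in {W}_{\overrightarrow{\bf p}(\cdot)}^1(\mathbb{R}^n)\cap L_\infty(\mathbb{R}^n)$ by the definition of $\mathcal{T}^1_{\overrightarrow{\bf p}(\cdot)}(\mathbb{R}^n)$. Since $u-T_h(u)$ vanishes on $\{|u|\le h\}$ and carries the sign of $u$ on $\{|u|>h\}$, one obtains
$$\nabla T_k(u-T_h(u))=\chi_{\{h\le |u|<h+k\}}\nabla u,\qquad |T_k(u-T_h(u))|\le k\chi_{\{|u|\ge h\}},$$
and the pointwise sign matching that $T_k(u-T_h(u))$ has the sign of $u$ wherever it is nonzero.

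With this choice of $\xi$, coercivity \eqref{us3} bounds the diffusion term from below by $\overline{a}\int_{\{h\le |u|<h+k\}}\P(\x,\nabla u)\,d\x-\int_{\{|u|\ge h\}}\phi\,d\x$. Moreover, \eqref{us5} together with the elementary fact that $|u|^{p_0(\x)-2}u$ shares the sign of $u$ shows that $(b(\x,u,\nabla u)+|u|^{p_0(\x)-2}u)T_k(u-T_h(u))\ge 0$ pointwise, so this lower-order contribution may be discarded. Using the crude bound $|T_k(u-T_h(u))|\le k\chi_{\{|u|\ge h\}}$ on the right-hand side of \eqref{intn}, I arrive at
$$\overline{a}\int_{\{h\le|u|<h+k\}}\P(\x,\nabla u)\,d\x \le \int_{\{|u|\ge h\}}\phi\,d\x + k\int_{\{|u|\ge h\}}|f|\,d\x + k\int_{\{|u|\ge h\}}|f_0|\,d\x.$$

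It then suffices to show that each integral on the right tends to zero as $h\to\infty$. By Remark \ref{remark_1}, ${\rm meas}\{|u|\ge h\}\to 0$, so absolute continuity of the Lebesgue integral disposes of the $\phi$- and $f$-contributions at once, since $\phi,f\in L_1(\mathbb{R}^n)$. The main obstacle is the $f_0$-term, because a priori $f_0\in L_{p_0'(\cdot)}(\mathbb{R}^n)$ need not belong to $L_1(\mathbb{R}^n)$. I would handle it via H\"older's inequality \eqref{gel} applied to $f_0$ and $\chi_{\{|u|\ge h\}}$, giving
$$\int_{\{|u|\ge h\}}|f_0|\,d\x\le 2\|f_0\|_{p_0'(\cdot)}\|\chi_{\{|u|\ge h\}}\|_{p_0(\cdot)}.$$
The modular of the characteristic function equals ${\rm meas}\{|u|\ge h\}\to 0$, and the norm-modular relation \eqref{st3} converts this modular decay into Luxemburg-norm decay. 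This last step is the only point that requires the variable-exponent Orlicz formalism and constitutes the only genuinely new ingredient compared with the $L_1$-data setting treated previously by the author.
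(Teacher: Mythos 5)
Your proof is correct and follows essentially the same route as the paper: testing \eqref{intn} with $\xi=T_h(u)$ (admissible by Lemma \ref{lemma_6}), discarding the lower-order terms through the sign argument based on \eqref{us5}, bounding the diffusion term from below via \eqref{us3}, and sending $h\to\infty$ using \eqref{remark_1_1}. The only divergence is the $f_0$-term: the paper bounds $k|f_0|\leq k|f_0|^{p_0'(\x)}+k$ pointwise, reducing everything to $L_1$ data plus a term $k\,{\rm meas}\{|u|\geq h\}$, whereas you use H\"older's inequality \eqref{gel} for $f_0$ against $\chi_{\{|u|\geq h\}}$ together with the norm--modular relation \eqref{st3}; both dispose of this term equally well.
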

\begin{proof}
	Taking $\xi=T_{h} (u)$ in inequality \eqref{intn}, we obtain
	\begin{align*}
	&\int\limits_{\{ h\leq|u|<k+h\}} \a(\x,u,\nabla u)\cdot\nabla u \, d\x
	\\
	&+\int\limits_{\{ h\leq|u|\}}\left(b(\x,u,\nabla u)+|u|^{p_0(\x)-2}u\right)T_{k}(u-T_h(u)) \, d\x
	\leq k\int\limits_{\{ h\leq|u|\}}(|f|+|f_0|)\, d\x.\nonumber
	\end{align*}
	Then, using \eqref{us5}, we derive the inequality
	\begin{align*}
	\int\limits_{\{ h\leq|u|<k+h\}} \a(\x,u,\nabla u)&\cdot\nabla u \, d\x + k \int\limits_{\{|u|\geq k+h\}}\left(|b(\x,u,\nabla u)|+|u|^{p_0(\x)-1}\right) \, d\x
	\\
	&+\int\limits_{\{ h\leq|u|< k+h\}}\left(b(\x,u,\nabla u)+|u|^{p_0(\x)-2}u\right)(u-h\,{\rm sign}\,u) \, d\x\\
	&\leq k\int\limits_{\{ h\leq|u|\}}(|f|+|f_0|^{p_0'(\x)})\, d\x+k\,{\rm meas}\{|u|\geq h\}.
	\end{align*}
	In view of \eqref{us5}, the following inequality holds true for all $h\leq |u|$:
	$$
	(b(\x,u,\nabla u)+|u|^{p_0(\x)-2}u)(u-h\,{\rm sign}\,u)\ge 0.
	$$
	Combining the last two inequalities and applying \eqref{us3}, we derive that
	\begin{align*}
	\overline{a}\int\limits_{\{ h\leq |u|<k+h\}}\P(\x,\nabla u) \, d\x
	&\leq
	k\int\limits_{\{ h\leq|u|\}}\left(|f|+|f_0|^{p_0'(\x)}\right)\, d\x\\
	&+\int\limits_{\{h\leq  |u|\}}|\phi|\, d\x+k \, {\rm meas}\{h\leq |u|\}
	\end{align*}
	for any $k>0$.
	Finally, recalling that $f, \phi \in L_1(\Omega)$, $f_0\in L_{p'_0(\cdot)}(\Omega)$, and taking into account \eqref{remark_1_1}, we make the passage to the limit as $h\to \infty$ and obtain \eqref{lemma_3_}.
\end{proof}

\begin{lem}\label{lemma_4}
	Let $v^j,\, j\in \mathbb{N}$, and $v$ be functions from $L_{p(\cdot)}(\mathbb{R}^n)$ such that
	$\{v^j\}_{j\in \mathbb{N}}$  is bounded in
	$L_{p(\cdot)}(\mathbb{R}^n)$  and
	$$
	v^j\rightarrow
	v \quad\mbox{a.e.\ in}\quad \mathbb{R}^n,\quad j\rightarrow\infty.
	$$
	Then
	$$
	v^j\rightharpoonup v
	\quad\mbox{weakly in}\quad L_{p(\cdot)}(\mathbb{R}^n),\quad j\rightarrow\infty.
	$$
\end{lem}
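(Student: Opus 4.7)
The plan is to combine reflexivity of $L_{p(\cdot)}(\mathbb{R}^n)$ with Mazur's theorem to identify $v$ as the unique weak cluster point of $\{v^j\}$. Since the excerpt records that $L_{p(\cdot)}(\mathbb{R}^n)$ is a reflexive Banach space, any bounded sequence admits weakly convergent subsequences. The subsequence principle then reduces the claim to showing that every weak cluster point of $\{v^j\}$ in $L_{p(\cdot)}(\mathbb{R}^n)$ coincides with $v$ almost everywhere.

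So I fix an arbitrary subsequence, pass to a further weakly convergent subsequence $v^{j_k} \rightharpoonup w$ in $L_{p(\cdot)}(\mathbb{R}^n)$, and apply Mazur's theorem. This produces finite convex combinations
\begin{equation*}
w_N = \sum_{k=N}^{M(N)} \lambda_k^N v^{j_k}, \qquad \lambda_k^N \geq 0, \quad \sum_{k=N}^{M(N)} \lambda_k^N = 1,
\end{equation*}
with $w_N \to w$ strongly in $L_{p(\cdot)}(\mathbb{R}^n)$. Since $1 < p^- \leq p^+ < \infty$, Luxemburg-norm convergence is equivalent to modular convergence by \eqref{st3}, so $\int_{\mathbb{R}^n} |w_N - w|^{p(\x)}\, d\x \to 0$. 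A Chebyshev estimate on each ball $B(R)$ (using $|w_N-w|^{p(\x)} \geq \varepsilon^{p^-}$ on $\{|w_N-w|>\varepsilon\}\cap B(R)$ when $\varepsilon < 1$) gives convergence in measure on bounded sets, and a diagonal extraction over an exhaustion by balls yields a subsequence $\{w_{N_l}\}$ with $w_{N_l} \to w$ almost everywhere in $\mathbb{R}^n$.

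Next, I check that $w_N \to v$ almost everywhere, directly from the hypothesis $v^j \to v$ a.e. At any point $\x$ where $v^j(\x) \to v(\x)$, for each $\varepsilon > 0$ there exists $K$ with $|v^{j_k}(\x) - v(\x)| < \varepsilon$ for every $k \geq K$; for $N \geq K$ every index appearing in $w_N$ satisfies $k \geq N \geq K$, so
\begin{equation*}
|w_N(\x) - v(\x)| \leq \sum_{k=N}^{M(N)} \lambda_k^N\,|v^{j_k}(\x) - v(\x)| < \varepsilon.
\end{equation*}
Combined with the previous step this forces $w = v$ almost everywhere, and the subsequence principle then upgrades this to the desired weak convergence of the full sequence.

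The only delicate point is the passage from strong $L_{p(\cdot)}$ convergence of $\{w_N\}$ to almost-everywhere convergence along a subsequence; the variable-exponent setting causes no real difficulty here because the bounds $p^- > 1$ and $p^+ < \infty$ let the classical Chebyshev-plus-diagonal argument run unchanged via the modular inequality \eqref{st3}. Apart from that, every step is formally identical to the constant-exponent version of the statement.
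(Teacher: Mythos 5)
Your proof is correct, but it takes a genuinely different route from the paper's. The paper reduces the claim to Lemma~1.3 of Lions for bounded domains, whose argument runs through Egorov's theorem: almost-everywhere convergence is upgraded to uniform convergence off an arbitrarily small set, and testing a weakly convergent subsequence against $L_{\infty}$ functions on that set identifies the weak limit; the passage from bounded domains to $\mathbb{R}^n$ is then made via reflexivity and the uniqueness of the weak limit on each bounded piece. You instead identify the weak cluster point $w$ by applying Mazur's theorem to the tail of the weakly convergent subsequence: the resulting convex combinations $w_N$ converge strongly in $L_{p(\cdot)}(\mathbb{R}^n)$ to $w$ (hence, after a Chebyshev argument and a diagonal extraction, almost everywhere along a subsequence), while they also converge pointwise almost everywhere to $v$ because they are built from tails of $\{v^{j_k}\}$. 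Your route works directly in $\mathbb{R}^n$, avoids both Egorov's theorem and the bounded-domain intermediate step, and is fully self-contained; the trade-off is invoking Mazur's theorem and the chain norm $\Rightarrow$ modular $\Rightarrow$ a.e.\ convergence, which the paper does not need. Both arguments share the same skeleton (reflexivity plus the subsequence principle) and are of comparable length.

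One small slip that does not affect the argument: on the set $\{|w_N - w| > \varepsilon\} \cap B(R)$ with $\varepsilon < 1$, the correct pointwise lower bound is $|w_N - w|^{p(\x)} > \varepsilon^{p(\x)} \geq \varepsilon^{p^+}$, not $\varepsilon^{p^-}$, since raising a number below $1$ to a larger exponent yields a smaller value. Either way one obtains a positive constant, so the Chebyshev estimate and the diagonal extraction go through unchanged.
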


For the space $L_{p(\cdot)}(Q)$, where $Q$ is a bounded domain, the proof of Lemma \ref{lemma_4} is identical with the proof of Lemma 1.3 from \cite[Chapter 1]{Lions} ($p(\x)=p$).
Then, the validity of Lemma \ref{lemma_4} follows from the reflexivity of the space $L_{p(\cdot)}(\mathbb{R}^n)$ and the weak compactness of a bounded set in $L_{p(\cdot)}(\mathbb{R}^n)$.

\begin{lem}\label{lemma_8}
	Let functions $v^j,\, j\in \mathbb{N},$ and $v\in L_{\infty}(\mathbb{R}^n)$ be such that $\{v^j\}_{j\in \mathbb{N}}$ is bounded in  $L_{\infty}(\mathbb{R}^n)$ and
	$$
	v^j\rightarrow v \quad\mbox{a.e.\ in}\quad \mathbb{R}^n,\quad j\rightarrow\infty.
	$$
	Then
	$$
	v^j\stackrel{*}\rightharpoonup v \quad\mbox{weakly in}\quad L_{\infty}(\mathbb{R}^n),\quad j\rightarrow\infty.
	$$
	
	If, in addition, $h^j,\, j\in \mathbb{N}$, and $h$ are functions from $L_{p(\cdot)}(\mathbb{R}^n)$ such that
	$$
	h^j \rightarrow h \quad\mbox{strongly in}\quad L_{p(\cdot)}(\mathbb{R}^n),\quad j\rightarrow\infty,
	$$
	then
	$$
	v^jh^j\rightarrow v h \quad\mbox{strongly in}\quad L_{p(\cdot)}(\mathbb{R}^n),\quad j\rightarrow\infty.
	$$
\end{lem}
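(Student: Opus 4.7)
For the weak-$*$ convergence, the plan is to reduce to the duality $L_\infty(\mathbb{R}^n)=(L_1(\mathbb{R}^n))^*$ and apply Lebesgue's dominated convergence theorem. Set $M=\sup_j\|v^j\|_\infty<\infty$; passing to the pointwise limit shows $|v(\mathrm{x})|\le M$ a.e., so indeed $v\in L_\infty(\mathbb{R}^n)$. For each fixed $g\in L_1(\mathbb{R}^n)$ the sequence $v^j g$ converges pointwise a.e.\ to $vg$ and is dominated by $M|g|\in L_1(\mathbb{R}^n)$, so dominated convergence yields $\langle v^j g\rangle\to\langle vg\rangle$. Since this holds for every $g\in L_1(\mathbb{R}^n)$, this is exactly the weak-$*$ convergence $v^j\stackrel{*}\rightharpoonup v$ in $L_\infty(\mathbb{R}^n)$.

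\textbf{Product convergence.} For the second assertion I would use the splitting
\begin{equation*}
v^j h^j - v h = v^j(h^j-h) + (v^j-v)h.
\end{equation*}
The first term is easy: since $|v^j(h^j-h)|\le M|h^j-h|$ pointwise, the definition of the Luxemburg norm gives $\|v^j(h^j-h)\|_{p(\cdot)}\le M\|h^j-h\|_{p(\cdot)}\to 0$ by hypothesis. For the second term I would work through the modular $\rho_{p(\cdot)}$ and invoke the relations \eqref{st3}. We have $(v^j-v)h\to 0$ a.e.\ in $\mathbb{R}^n$, and
\begin{equation*}
|(v^j-v)h|^{p(\mathrm{x})}\le (2M)^{p^+}|h|^{p(\mathrm{x})},
\end{equation*}
with the majorant in $L_1(\mathbb{R}^n)$ since $h\in L_{p(\cdot)}(\mathbb{R}^n)$. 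Dominated convergence yields $\rho_{p(\cdot)}((v^j-v)h)\to 0$, which via \eqref{st3} forces $\|(v^j-v)h\|_{p(\cdot)}\to 0$. Combining the two bounds gives $v^jh^j\to vh$ strongly in $L_{p(\cdot)}(\mathbb{R}^n)$.

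\textbf{Main obstacle.} The only subtlety is the passage from modular to norm convergence for the term $(v^j-v)h$; this is routine once one appeals to the relations \eqref{st3} (strong convergence in norm is equivalent to modular convergence in $L_{p(\cdot)}$ spaces with $p^+<\infty$). Everything else reduces to dominated convergence with the uniform pointwise bound $M$ doing all the work.
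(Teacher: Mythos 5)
Your proof is correct and matches the paper's approach: the paper simply remarks that Lemma \ref{lemma_8} ``follows from the Lebesgue theorem,'' and your argument is exactly that---dominated convergence against $L_1$ test functions for the weak-$*$ part, and dominated convergence at the modular level (converted to norm convergence via \eqref{st3}) together with the elementary bound $\|v^j(h^j-h)\|_{p(\cdot)}\le M\|h^j-h\|_{p(\cdot)}$ for the product part. (A cosmetic remark: the majorant should be $\max\{(2M)^{p^-},(2M)^{p^+}\}|h|^{p(\x)}$ rather than $(2M)^{p^+}|h|^{p(\x)}$ to cover the case $2M<1$, but this does not affect the argument.)
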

The proof of Lemma \ref{lemma_8} follows from the Lebesgue theorem.

\section{Proofs of Theorems \ref{t1}, \ref{t2}}\label{s5}

\begin{proof}[Proof of Theorem \ref{t1}]
	
	\textit{Step 1.}
	Let us construct a sequence $\{f^m(\x)\}_{m=1}^{\infty}$ in the same way as in Lemma \ref{lemma-1}.
	Consider the equations
	\begin{equation}\label{theorem_2(1-2)}
	{\rm div}\;\a^m(\x,u,\nabla u)=a_0^m(\x,u,\nabla u),\quad \x\in \mathbb{R}^n,\quad m\in\mathbb{N},
	\end{equation}
	with functions
	\begin{align*}
	&\a^m(\x,s_0,\s)=\a(\x,T_m(s_0),\s),\\
	&a_0^m(\x,s_0,\s)=b^m(\x,s_0,\s)+|s_0|^{p_0(\x)-2}s_0-f^m(\x)-f_0(\x).
	\end{align*}
	Here
	\begin{align*}
	\a^m(\x,s_0,\s)=(a^m_1(\x,s_0,\s),\dots,a^m_n(\x,s_0,\s)),\quad  b^m(\x,s_0,\s)=T_m(b(\x,s_0,\s))\chi_{ B(m)}.
	\end{align*}
	Clearly,
	\begin{equation}\label{theorem_2(1-3)}
	|b^m(\x,s_0,\s)|\leq |b(\x,s_0,\s)|,\quad |b^m(\x,s_0,\s)|\leq m\chi_{ B(m)},\quad \x\in \mathbb{R}^n,\quad (s_0,\s)\in \mathbb{R}^{n+1}.
	\end{equation}
	Moreover, applying \eqref{us5}, we get the inequality
	\begin{equation}\label{theorem_2(1-4)}
	b^m(\x,s_0,\s)s_0\geq 0,\quad \x \in \mathbb{R}^n,\quad  (s_0,\s)\in \mathbb{R}^{n+1}.
	\end{equation}
	
	Define the operators ${\bf A}^m:{W}^{1}_{\overrightarrow{\bf p}(\cdot)}(\mathbb{R}^n)\rightarrow{W}^{-1}_{\overrightarrow{\bf p}'(\cdot)}(\mathbb{R}^n)$, ${\bf A}^m=\A^m+{A}^m_0$ for any $u,v\in {W}^{1}_{\overrightarrow{\bf p}(\cdot)}(\mathbb{R}^n)$ by the equalities
	\begin{equation*}\label{theorem_2(1-6)}
	<{\A}^m(u),v>=\langle \a^m(\x,u, \nabla u)\cdot\nabla
	v\rangle,\quad
	<{A}^m_0(u),v>=\langle a^m_0(\x,u,\nabla u)v\rangle.
	\end{equation*}
	A generalized solution of equation \eqref{theorem_2(1-2)} is a function $u\in{W}^{1}_{\overrightarrow{\bf p}(\cdot)} (\mathbb{R}^n)$ which satisfies the identity
	\begin{equation*}\label{theorem_2(1-5-6)}
	<{\bf A}^m(u),v>= 0
	\end{equation*}
	for any function $v\in {W}^{1}_{\overrightarrow{\bf p}(\cdot)}(\mathbb{R}^n)$.
	
	The following result holds true.
	\begin{thm}\label{t3}
		Let assumptions \eqref{us0}--\eqref{us5} be satisfied.
		Then there exists a generalized solution of equation \eqref{theorem_2(1-2)}.
	\end{thm}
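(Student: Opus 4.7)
The plan is to verify that ${\bf A}^m$ is bounded, coercive, and pseudomonotone on the reflexive Banach space $W^1_{\overrightarrow{\bf p}(\cdot)}(\mathbb{R}^n)$, so that the classical Br\'ezis--Lions surjectivity theorem for such operators produces a function $u$ with ${\bf A}^m(u)=0$, which is exactly a generalized solution of \eqref{theorem_2(1-2)}.

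For \emph{boundedness}, the truncation $|T_m(s_0)|\le m$ and the monotonicity of $\widehat{a}_i$ turn \eqref{us1} into a Leray--Lions-type bound with a constant depending on $m$ but independent of $s_0$:
\[
|a_i^m(\x,s_0,\s)|\le \widehat{a}_i(m)\bigl((\P(\x,\s))^{1/p_i'(\x)}+\Phi_i(\x)\bigr).
\]
Combined with \eqref{theorem_2(1-3)}, so that $b^m\in L^\infty(\mathbb{R}^n)$ is compactly supported in $B(m)$, the H\"older inequality \eqref{gel} and standard Nemytskii estimates show that ${\bf A}^m$ maps $W^1_{\overrightarrow{\bf p}(\cdot)}(\mathbb{R}^n)$ into its dual and sends bounded sets to bounded sets. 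For \emph{coercivity}, \eqref{us3} and \eqref{theorem_2(1-4)} give
\[
\langle {\bf A}^m(u),u\rangle \ge \overline a\,\rho_{\overrightarrow{\p}(\cdot)}(\nabla u)+\rho_{p_0(\cdot)}(u)-\|\phi\|_1-\langle(|f^m|+|f_0|)|u|\rangle,
\]
and since $|f^m|\le m\chi_{B(m)}\in L_{p_0'(\cdot)}(\mathbb{R}^n)$ and $f_0\in L_{p_0'(\cdot)}(\mathbb{R}^n)$, the last term is absorbed into $\tfrac12\rho_{p_0(\cdot)}(u)$ via \eqref{gel} and \eqref{ung} at the cost of a constant $C(m)$. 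Applying \eqref{st3} converts this modular control into coercivity in the norm of $W^1_{\overrightarrow{\bf p}(\cdot)}(\mathbb{R}^n)$.

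For \emph{pseudomonotonicity}, assume $u^j\rightharpoonup u$ in $W^1_{\overrightarrow{\bf p}(\cdot)}(\mathbb{R}^n)$ with $\limsup_{j\to\infty} \langle {\bf A}^m(u^j),u^j-u\rangle\le 0$. The local compact embedding of Lemma \ref{lemma_0} (applied on each ball $B(R)$ via cutoffs) yields a subsequence with $u^j\to u$ a.e.\ in $\mathbb{R}^n$ and in $L_{p_0(\cdot)}$ locally. The monotonicity of $s\mapsto|s|^{p_0(\x)-2}s$, the sign condition \eqref{theorem_2(1-4)}, and the compact support of $b^m$ make the lower-order contribution to $\langle {\bf A}^m(u^j),u^j-u\rangle$ nonnegative up to $o(1)$, so
\[
\limsup_{j\to\infty}\langle \a^m(\cdot,u^j,\nabla u^j)-\a^m(\cdot,u^j,\nabla u),\,\nabla u^j-\nabla u\rangle\le 0,
\]
and the strict monotonicity \eqref{us2} forces $\nabla u^j\to \nabla u$ a.e. Then \eqref{P's} provides $\L_{\overrightarrow{\p}'(\cdot)}$-equi-integrability of $\{\a^m(\cdot,u^j,\nabla u^j)\}$, and Vitali's theorem together with Lemmas \ref{lemma_4} and \ref{lemma_8} delivers the convergence of all terms, establishing pseudomonotonicity.

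The main obstacle is this Leray--Lions a.e.\ gradient convergence on an unbounded domain: because Lemma \ref{lemma_0} only yields compactness on bounded subdomains, the Minty-type argument must be localized through cutoff functions before the strict monotonicity \eqref{us2} can be exploited; once $\nabla u^j\to\nabla u$ a.e.\ globally, the passage to the limit in every term is routine.
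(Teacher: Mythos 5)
Your plan---checking that ${\bf A}^m$ is bounded, coercive and pseudomonotone on the reflexive space ${W}^{1}_{\overrightarrow{\bf p}(\cdot)}(\mathbb{R}^n)$ and then invoking the standard surjectivity theorem---is essentially the same argument the paper relies on, since the paper gives no details here and simply refers to \cite[Theorem A.1]{Kozh_2019}, which is proved along exactly these Leray--Lions/pseudomonotone lines (local compact embedding via Lemma \ref{lemma_0}, the Minty-type trick with \eqref{us2}, absorption of the zero-order terms via \eqref{ung}, \eqref{theorem_2(1-4)}). One small caution: \eqref{P's} only bounds the modular of $\a^m(\x,u^j,\nabla u^j)$ by a multiple of $\P(\x,\nabla u^j)+\Psi^n$, which is bounded in $L_1$ but not equi-integrable, so Vitali's theorem is not directly available; the needed flux convergence should instead be obtained from boundedness in $\L_{\overrightarrow{\p}'(\cdot)}$ together with a.e.\ convergence via Lemma \ref{lemma_4}, exactly as the paper does at the analogous point \eqref{7-2}.
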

	Theorem \ref{t3} can be proved in much the same way as \cite[Theorem A.1]{Kozh_2019}.
	
	By Theorem \ref{t3}, for every $m\in\mathbb{N}$ there exists a generalized solution $u^m\in{W}^{1}_{\overrightarrow{\bf p}(\cdot)}(\mathbb{R}^n)$ of equation \eqref{theorem_2(1-2)}.
	Thus, for any function $v\in {W}^{1}_{\overrightarrow{\bf p}(\cdot)}(\mathbb{R}^n)$ the following identity holds true:
	\begin{align}
	\nonumber
	\langle (b^m(\x,u^m,\nabla u^m)&+|u^m|^{p_0(\x)-2}u^m-f^m(\x)-f_0(\x))v\rangle
	\\
	&+\langle \a(\x,T_m (u^m), \nabla u^m)\cdot\nabla
	v\rangle= 0,\quad m\in\mathbb{N}.
	\label{theorem_2(1-5)}
	\end{align}

	\textit{Step 2.}
	In this step, we establish a priori estimates for the sequence $\{u^m\}_{m\in \mathbb{N}}$.
	
	Taking $v=T_{k,h} (u^m)=T_{k}(u^m-T_h(u^m))$, $h>k>0$, in \eqref{theorem_2(1-5)} and applying inequality \eqref{ung}, we obtain
	\begin{align}
	&\int\limits_{\{ h\leq|u^m|<k+h\}} \a(\x,T_m(u^m),\nabla u^m)\cdot\nabla u^m \, d\x
	\nonumber\\
	&+\int\limits_{\{ h\leq|u^m|\}}\left(b^m(\x,u^m,\nabla u^m)+|u^m|^{p_0(\x)-2}u^m\right)T_{k,h} (u^m)\, d\x\label{theorem_2(2-1)}\\
	&\leq k\int\limits_{\{|u^m|\geq h\}}|f^m|\, d\x+C_1(\varepsilon)\int\limits_{\{ |u^m|\geq h\}}|f_0|^{p'_0(\x)}\, d\x+\varepsilon\int\limits_{\{|u^m|\geq h\}}|T_{k,h} (u^m)|^{p_0(\x)}\, d\x.\nonumber
	\end{align}
	Thanks to $\eqref{theorem_2(1-4)}$, the following inequality is satisfied for any $h\leq |u^m|$:
	$$
	(b^m(\x,u^m,\nabla u^m)+|u^m|^{p_0(\x)-2}u^m)T_{k,h} (u^m)\ge 0.
	$$
	Due to this fact, we obtain from \eqref{theorem_2(2-1)} that
	\begin{align*}
	&\int\limits_{\{ h\leq|u^m|<k+h\}} \a(\x,T_m(u^m),\nabla u^m)\cdot\nabla u^m \, d\x
	\\
	&\quad+\int\limits_{\{ h\leq|u^m|<k+h\}}\left(|b^m(\x,u^m,\nabla u^m)|+|u^m|^{p_0(\x)-1}\right)|u^m-h{\rm sign}\, u^m |\, d\x
	\\
	&\quad+k\int\limits_{\{|u^m|\geq k+h\}}\left(|b^m(\x,u^m,\nabla u^m)|+|u^m|^{p_0(\x)-1}\right) \, d\x
	\\
	&\leq k\int\limits_{\{|u^m|\geq h\}}|f^m|\, d\x+C_1(\varepsilon)\int\limits_{\{ |u^m|\geq h\}}|f_0|^{p'_0(\x)}\, d\x
	\\
	&\quad+\varepsilon 2^{p_0^+-1}\int\limits_{\{ h\leq|u^m|<k+h\}}|u^m|^{p_0(\x)-1}|u^m-h{\rm sign}\, u^m |\, d\x
	\\
	&\quad+\varepsilon k \int\limits_{\{ |u^m|\geq k+ h\}}|u^m|^{p_0(\x)-1}\, d\x.\nonumber
	\end{align*}
	Choosing an appropriate $\varepsilon$ and using \eqref{theorem_2(1-1)}, \eqref{us3}, we derive the inequality
	\begin{align}
	\int\limits_{\{ h\leq|u^m|<k+h\}} &\left(\a(\x,T_m(u^m), \nabla u^m)\cdot\nabla u^m +\phi\right)\, d\x\label{theorem_2(2-2)}\\
	&+k\int\limits_{\{|u^m|\geq k+h\}}\left(|b^m(\x,u^m,\nabla u^m)|+\frac{1}{2}|u^m|^{p_0(\x)-1}\right) \, d\x
	\nonumber\\
	&\leq \int\limits_{\{ |u^m|\geq h\}}\left(k|f|+C_1|f_0|^{p'_0(\x)}+\phi\right)\, d\x\leq C_2k+C_3,\quad m\in \mathbb{N}.\nonumber
	\end{align}
	
	Let us now take $T_k (u^m)$ as a test function in \eqref{theorem_2(1-5)}. Performing the same calculations as above, we get the inequality
	\begin{align}
	\overline{a}\int\limits_{\{ |u^m|<k\}}\P(\x,\nabla u^m) \, d\x
	&+ k\int\limits_{\{ |u^m|\geq k\}}\left(|b^m(\x,u^m,\nabla u^m)|+\frac{1}{2}|u^m|^{p_0(\x)-1}\right)  \, d\x\nonumber\\
	&+\frac{1}{2}\int\limits_{\{ |u^m|< k\}}|u^m|^{p_0(\x)} \, d\x\leq C_2k+C_4,\quad m\in \mathbb{N}.\label{theorem_2(2-3)}
	\end{align}
	
	The estimate \eqref{theorem_2(2-3)} implies
	\begin{align}
	&\int\limits_{\mathbb{R}^n}|T_k(u^m)|^{p_0(\x)}\, d\x=\int\limits_{\{ |u^m|<k\}}|u^m|^{p_0(\x)}\, d\x+\int\limits_{\{ |u^m|\geq k\}}k^{p_0(\x)}\, d\x\nonumber\\
	&\leq\int\limits_{\{ |u^m|<k\}}|u^m|^{p_0(\x)}\, d\x+k\int\limits_{\{ |u^m|\geq k\}}|u^m|^{p_0(\x)-1}\, d\x\leq C_5(k),\quad m\in \mathbb{N}.
	\label{theorem_2(2-5)}
	\end{align}
	Moreover, we obtain from \eqref{theorem_2(2-3)} the bound
	\begin{equation}\label{theorem_2(2-6)}
	\int\limits_{\{ |u^m|<k\}}\P(\x,\nabla u^m) \, d\x=\int\limits_{\mathbb{R}^n}\P(\x,\nabla T_k (u^m))\, d\x\leq C_6(k),\quad m\in \mathbb{N}.
	\end{equation}
	
	Combining \eqref{theorem_2(1-3)}, \eqref{us4}, \eqref{theorem_2(2-6)}, we derive the inequality
	\begin{equation}\label{theorem_2(2-7)}
	\int\limits_{\{|u^m|<k\}}|b^m(\x,u^m,\nabla u^m)|\, d\x\leq  \widehat{b}(k)\int\limits_{\{|u^m|<k\}} \left(\P(\x,\nabla T_k
	(u^m))+\Phi_0(\x)\right)\, d\x
	\leq C_7(k)
	\end{equation}
	for all $m \in \mathbb{N}$.
	From \eqref{theorem_2(2-7)}, \eqref{theorem_2(2-3)}, we get the estimate
	\begin{equation}\label{theorem_2(2-8)}
	\|b^m(\x,u^m,\nabla u^m)\|_{1} \leq C_8(k),\quad m\in\mathbb{N}.
	\end{equation}
	
	\textit{Step 3.}
	We have from the estimate \eqref{theorem_2(2-3)} that
	\begin{equation*}\label{theorem_2(3-1)}
	{\rm meas}\,\{ |u^m|\geq\rho\}\rightarrow 0 \quad \mbox {uniformly with respect to}\; m\in\mathbb{N}, \quad \rho\rightarrow \infty.
	\end{equation*}
	
	Let us establish the following convergence up to a subsequence:
	\begin{equation}\label{theorem_2(3-2)}
	u^m\rightarrow u  \quad \mbox {a.e.\ in} \quad \mathbb{R}^n,\quad m\rightarrow \infty.
	\end{equation}
	Let $\eta_R(r)=\min(1,\max(0,R+1-r))$.
	Then, applying \eqref{sum}, we derive from the estimate \eqref{theorem_2(2-6)} that
	\begin{align*}
	&\int\limits_{\mathbb{R}^n}\P(\x,\nabla (\eta_{R}(|\x|)T_\rho (u^m)))\, d\x
	\\
	&\leq C_9 \int\limits_{\{|u^m|<\rho\}}\P(\x,\nabla u^m ) \, d\x
	+C_9\int\limits_{\mathbb{R}^n}\P(\x,T_\rho (u^m)\nabla \eta_{R}(|\x|))\, d\x\leq C_{10}(\rho,R)
	\end{align*}
	for any $R,\rho>0$.
	Thus, for any fixed $\rho, R>0$ we have the boundedness of the set $\{\eta_{R} T_\rho (u^m)\}_{m\in\mathbb{N}}$ in
	$\mathring{H}_{\overrightarrow{\p}(\cdot)}^1( B(R+1))$.
	Thanks to assumption \eqref{us0}, the space $\mathring{H}_{\overrightarrow{\bf p}(\cdot)}^{1}( B(R+1))$ is compactly embedded in the space $L_{p_-(\cdot)}(B(R+1))$ by Lemma \ref{lemma_0}.
	Therefore, for any fixed $\rho, R>0$ we get the convergence
	$T_\rho (u^m) \rightarrow v_{\rho}$ in $L_{p_-(\cdot)}( B(R))$, as well as the convergence up to a subsequence
	$T_\rho(u^m)\rightarrow v_{\rho}$ a.e.\ in $ B(R)$.
	Then, we obtain convergence \eqref{theorem_2(3-2)} in much the same way as in \cite[5.3]{Kozh_2019}.
	
	It follows from convergence \eqref{theorem_2(3-2)} that for any $k>0$,
	\begin{equation}\label{theorem_2(3-7)}
	T_k(u^m)\rightarrow T_k(u)  \quad \mbox {a.e.\ in} \quad \mathbb{R}^n,\quad m\rightarrow \infty.
	\end{equation}
	
	In view of inequality \eqref{theorem_2(2-2)}, the convergence
	\begin{equation}\label{theorem_2(3-8)}
	|u^m|^{p_0(\x)-2}u^m \to  |u|^{p_0(\x)-2}u \quad\mbox{in} \quad L_{1,{\rm loc}}(\mathbb{R}^n),\quad m\rightarrow \infty,
	\end{equation}
	is established by the same arguments as in \cite[5.3]{Kozh_2019}.
	
	\textit{Step 4.}
	Let us show that $T_k (u)\in  {W}_{\overrightarrow{\bf p}(\cdot)}^{1}(\mathbb{R}^n)$ for any $k>0$.
	Combining \eqref{theorem_2(2-5)}, \eqref{theorem_2(2-6)}, \eqref{st3}, we get the following estimate for any fixed $k>0$:
	$$
	\| T_k(u^m)\|_{
		{W}_{\overrightarrow{\bf p}(\cdot)}^{1}(\mathbb{R}^n)}\leq C_{11}(k),\quad m\in\mathbb{N}.
	$$
	The reflexivity of the space ${W}_{\overrightarrow{\bf p}(\cdot)}^{1}(\mathbb{R}^n)$ allows us to extract a weakly convergent in ${W}_{\overrightarrow{\bf p}(\cdot)}^{1}(\mathbb{R}^n)$ subsequence $T_k (u^m)\rightharpoonup v_k$, $m\rightarrow \infty$,
	where $v_k\in {W}_{\overrightarrow{\bf p}(\cdot)}^{1}(\mathbb{R}^n)$.
	The continuity of the natural map ${W}_{\overrightarrow{\bf p}(\cdot)}^{1}(\mathbb{R}^n) \mapsto {\bf L}_{\overrightarrow{\bf p}(\cdot)}(\mathbb{R}^n)$ yields the weak convergences
	$$
	\nabla T_k (u^m)\rightharpoonup \nabla v_{k} \quad \mbox{in}\quad \L_{\overrightarrow{\p}(\cdot)}(\mathbb{R}^n),\quad
	T_k (u^m)\rightharpoonup  v_{k} \quad \mbox{in}\quad L_{p_0(\cdot)}(\mathbb{R}^n), \quad m\rightarrow \infty.
	$$
	
	Using convergence \eqref{theorem_2(3-7)} and applying Lemma \ref{lemma_4}, we get the weak convergence
	$$
	T_k (u^m)\rightharpoonup T_k(u) \quad \mbox{in}\quad L_{p_0(\cdot)}(\mathbb{R}^n),\quad m\rightarrow \infty.
	$$
	This implies the equality $v_k=T_k(u)\in {W}_{\overrightarrow{\bf p}(\cdot)}^{1}(\mathbb{R}^n).$

	\textit{Step 5.}
	The strong convergence
	\begin{equation}\label{theorem_2(4--0)}
	\nabla T_k(u^m)\rightarrow \nabla T_k(u)\quad \mbox{in}\quad  \L_{\overrightarrow{\p}(\cdot),{\rm
			loc}}(\mathbb{R}^n),\quad m\rightarrow \infty,
	\end{equation}
	is obtained in much the same way as in \cite[5.5]{Kozh_2019}.
	Convergence \eqref{theorem_2(4--0)} implies the following convergences up to a subsequence:
	\begin{equation}\label{theorem_2(4-30)}
	\nabla u^m\rightarrow \nabla u \quad  \mbox{a.e.\ in} \quad \mathbb{R}^n,\quad m\rightarrow\infty;
	\end{equation}
	\begin{equation}\label{theorem_2(4-31)}
	\nabla T_k(u^m)\rightarrow \nabla T_k(u) \quad  \mbox{a.e.\ in} \quad \mathbb{R}^n,\quad m\rightarrow\infty.
	\end{equation}
	
	By \eqref{theorem_2(2-6)}, \eqref{P's}, we get the following estimate for any $k>0$:
	$$
	\|\a(\x,T_k(u^m),\nabla T_k(u^m))\|_{\overrightarrow{\p}(\cdot)}\leq C_{12}(k),
	\quad m\in \mathbb{N}.
	$$
	Therefore, by Lemma \ref{lemma_4}, using convergences \eqref{theorem_2(3-7)}, \eqref{theorem_2(4-31)}, we deduce the weak convergence
	\begin{equation}\label{7-2}
	\a(\x,T_k(u^m),\nabla T_k(u^m))\rightharpoonup \a(\x,T_k(u),\nabla T_k(u))\quad\mbox{in}\quad \L_{\overrightarrow{\p}'(\cdot)}(\mathbb{R}^n),\quad m\rightarrow \infty.
	\end{equation}
	
	The continuity of $b(\x,s_0,\s)$ with respect to $(s_0,\s) $ and convergences \eqref{theorem_2(3-2)}, \eqref{theorem_2(4-30)} imply
	\begin{equation}\label{theorem_2(4-32)}
	b^m(\x,u^m,\nabla u^m)\to
	b(\x,u, \nabla u)\quad\mbox{a.e.\ in} \quad \mathbb{R}^n,\quad m\rightarrow\infty.
	\end{equation}
	In view of \eqref{theorem_2(4-32)}, using Fatou's lemma, we derive from the estimate \eqref{theorem_2(2-8)} that
	\begin{equation*}\label{theorem_2(4-33)}
	b(\x,u, \nabla u)\in L_1(\mathbb{R}^n).
	\end{equation*}
	Therefore, assumption 1) of Definition \ref{D1} is satisfied.
	
	Thanks to inequality \eqref{theorem_2(2-2)}, using convergence \eqref{theorem_2(4--0)}, one can prove in the same way as in \cite[5.6]{Kozh_2019} that
	\begin{equation}\label{theorem_2(5-1)}
	b^m(\x,u^m,\nabla u^m)\to
	b(\x,u,\nabla u)\quad\mbox{in} \quad L_{1,{\rm loc}}(\mathbb{R}^n),\quad m\rightarrow \infty.
	\end{equation}
	
	\textit{Step 6.}
	To prove inequality \eqref{intn}, let us take a test function $v=T_k(u^m-\xi)$, $\xi\in C_0^1(\mathbb{R}^n)$, in identity \eqref{theorem_2(1-5)}. We get
	\begin{align}\label{theorem_2(7-1)}
	&\int\limits_{\mathbb{R}^n} \a(\x,T_m(u^m),\nabla u^m)\cdot\nabla T_k(u^m-\xi)\, d\x\\
	&+\int\limits_{\mathbb{R}^n} \left(b^m(\x,u^m,\nabla u^m)+|u^m|^{p_0(\x)-2}u^m-f^m-f_0\right)T_k(u^m-\xi)\, d\x=0.\nonumber
	\end{align}
	Let us demonstrate the passage to the limit as $m\rightarrow\infty$ only in the last integral.
	
	For $k_1>\max\{k,2\|\xi\|_{\infty}\}$, applying \eqref{theorem_2(2-3)}, we obtain the estimate
	\begin{align*}
	&\int\limits_{\mathbb{R}^n}|T_k(u^m-\xi)|^{p_0(\x)}\, d\x=\int\limits_{\{|u^m-\xi|<k_1\}}|T_k(u^m-\xi)|^{p_0(\x)}\, d\x+\int
	\limits_{\{|u^m-\xi|\geq k_1\}}k^{p_0(\x)}\, d\x\\
	&\leq \int\limits_{\{|u^m|<\widehat{k}_1\}}|u^m-\xi|^{p_0(\x)}\, d\x+\int\limits_{\{|u^m|\geq \widetilde{k}_1\}}k^{p_0(\x)}\, d\x\\
	&\leq 2^{p_0^+-1}\int\limits_{\{|u^m|<\widehat{k}_1\}}(|u^m|^{p_0(\x)}+|\xi|^{p_0(\x)})\, d\x+C_{13}\int\limits_{\{|u^m|\geq \widetilde{k}_1\}}\widetilde{k}_1^{p_0(\x)}\, d\x\leq C_{14}
	\end{align*}
	for all $m\in \mathbb{N}$,
	where $\widetilde{k}_1=k_1-\|\xi\|_{\infty}$, $\widehat{k}_1=k_1+\|\xi\|_{\infty}$.
	Therefore, thanks to \eqref{theorem_2(3-2)}, Lemma \ref{lemma_4} gives the weak convergence
	\begin{equation}\label{theorem_2(7-9)}
	T_k(u^m-\xi)\rightharpoonup T_k(u-\xi),\quad m\to
	\infty, \quad\mbox{in} \quad L_{p_0(\cdot)}(\Omega).
	\end{equation}
	Since $f_0\in L_{p'_0(\cdot)}(\Omega)$, convergence \eqref{theorem_2(7-9)} implies
	$$
	\int\limits_{\Omega}f_0 T_k(u^m-\xi)\, d\x\rightarrow\int\limits_{\Omega}f_0 T_k(u-\xi)\, d\x,\quad m\to\infty.
	$$
	The passages to the limit in the remaining integrals in equality \eqref{theorem_2(7-1)} are performed as in \cite[5.7]{Kozh_2019}.
\end{proof}

\begin{proof}[Proof of Theorem \ref{t2}]
	Let us prove that the entropy solution satisfies the properties of a renormalized solution.
	Assumption 1) holds true since it coincides with assumption 1) of Definition \ref{D1}.
	Assumption 2) is also satisfied (see \eqref{lemma_3_}).
	
	Let us prove equality \eqref{ren}.
	Let $\{u^m\}_{m\in \mathbb{N}}$ be a sequence of weak solutions of equation \eqref{theorem_2(1-2)}.
	Assume that $S\in W^1_{\infty}(\mathbb{R})$ is such that ${\rm supp}\;S\subset[-M,M]$ for $M>0$.
	For any $m\in \mathbb{N}$ and any function $\xi\in C_0^1(\mathbb{R}^n)$, taking $S(u^m)\xi\in\mathring
	{W}_{\overrightarrow{\bf p}(\cdot)}^1(\mathbb{R}^n)$ as a test function in \eqref{theorem_2(1-5)}, we derive
	\begin{align}
	&\langle \a(\x,T_m(u^m), \nabla u^m)\cdot(S'(u^m)\xi\nabla u^m+S(u^m)\nabla\xi)\rangle \label{7-1}\\
	&+\langle (b^m(\x,u^m,\nabla u^m)+|u^m|^{p_0(\x)-2}u^m-f^m(\x)-f_0)S(u^m)\xi\rangle=I^m+J^m=0.\nonumber
	\end{align}
	
	Clearly,
	\begin{align*}
	I^m
	&=\int\limits_{\mathbb{R}^n}\a(\x,T_m (u^m), \nabla u^m)\cdot(S'(u^m)\xi\nabla u^m+S(u^m)\nabla\xi)\, d\x
	\\
	&=\int\limits_{\mathbb{R}^n}\a(\x,T_M(u^m), \nabla T_M(u^m))\cdot(S'(u^m)\xi\nabla T_M (u^m)+S(u^m)\nabla\xi)\, d\x,~~ m\geq M.
	\end{align*}
	
	Since ${\rm supp}\,\xi$ is a bounded set in $\mathbb{R}^n$, convergences \eqref{theorem_2(3-2)},  \eqref{theorem_2(4--0)}, and Lemma \ref{lemma_8} yield
	\begin{equation}\label{7-3}
	S'(u^m)\xi\nabla T_M(u^m)+S(u^m)\nabla\xi\rightarrow S'(u)\xi\nabla T_M(u)+S(u)\nabla\xi\quad\mbox{in}\quad \L_{\overrightarrow{\p}(\cdot)}(\mathbb{R}^n)
	\end{equation}
	as $m \to \infty$.
	Combining \eqref{7-2}, \eqref{7-3}, we have
	\begin{align}
	\lim_{m\to \infty} I^m
	&=\int\limits_{\mathbb{R}^n} \a(\x,T_M(u),\nabla T_M(u))\cdot(S'(u)\xi\nabla T_M (u)+S(u)\nabla\xi)\, d\x
	\nonumber
	\\
	&=\int\limits_{\mathbb{R}^n}\a(\x,u,\nabla u)\cdot(S'(u)\xi\nabla u+S(u)\nabla\xi)\, d\x.
	\label{7-4}
	\end{align}
	
	Lemma \ref{lemma_8} implies that
	$$
	S(u^m)\xi\stackrel{*}\rightharpoonup S(u)\xi\quad \text{in} \quad L_{\infty}(\mathbb{R}^n),\quad m\rightarrow \infty.
	$$
	Thus, in view of convergences \eqref{theorem_2(1-0)}, \eqref{theorem_2(3-8)},  \eqref{theorem_2(5-1)} and the fact that $f_0\in L_1({\rm supp}\,\xi)$, we obtain the equality
	\begin{equation}\label{7-5}
	\lim_{m\to \infty} J^m=\int\limits_{\mathbb{R}^n}(b(\x,u,\nabla u)+|u|^{p_0(\x)-2}u-f-f_0)S(u)\xi \, d\x.
	\end{equation}
	Combining \eqref{7-1}, \eqref{7-4}, \eqref{7-5}, we get \eqref{ren}.
	Therefore, we conclude that $u$ is a renormalized solution of equation \eqref{ur}, \eqref{us6}.
\end{proof}


\end{document}